\date{}
\newtheorem{thm}{Theorem}%[section]
\newtheorem{lem}{Lemma}
\newtheorem{prop}{Proposition}
\newtheorem{cor}{Corollary}
\newtheorem{defi}{Definition}
\begin{document}

\title{Equistarable bipartite graphs
\thanks{This research is supported in part by ``Agencija za raziskovalno dejavnost Republike Slovenije'',
research program P$1$--$0285$ and research projects J$1$-$5433$, J$1$-$6720$, J$1$-$6743$, and BI-US/$14$--$15$--$050$.
The first author also thanks for partial support the National Science Foundation (Grant IIS-1161476).}}

\author{
Endre Boros\\
\small MSIS Department and RUTCOR, Rutgers University, New Jersey, USA\\
\small 100 Rockafeller Rd, Piscataway NJ 08854, USA\\
\small \texttt{Endre.Boros@rutgers.edu}\\
\and
Nina Chiarelli\\
\small University of Primorska, UP FAMNIT, Glagolja\v ska 8, SI6000 Koper, Slovenia\\
\small \texttt{nina.chiarelli@student.upr.si}\\
\and
Martin Milani\v c\\
\small University of Primorska, UP IAM, Muzejski trg 2, SI6000 Koper, Slovenia\\
\small University of Primorska, UP FAMNIT, Glagolja\v ska 8, SI6000 Koper, Slovenia\\
\small IMFM, Jadranska 19, 1000 Ljubljana, Slovenia\\
\small \texttt{martin.milanic@upr.si}
}

\maketitle

\begin{abstract}

\begin{sloppypar}
Recently, Milani\v{c} and Trotignon introduced the class of {\it equistarable} graphs as graphs without isolated vertices admitting
positive weights on the edges such that a subset of edges is of total weight $1$ if and only if
it forms a maximal star. Based on equistarable graphs, counterexamples to three conjectures on equistable graphs were constructed, in particular to Orlin's conjecture, which states that every equistable graph is a general partition graph.
\end{sloppypar}

In this paper we characterize equistarable bipartite graphs.
We show that a bipartite graph is equistarable if and only if every $2$-matching of the graph extends to a
matching covering all vertices of degree at least $2$. As a consequence of this result, we obtain that Orlin's conjecture holds
within the class of complements of line graphs of bipartite graphs.

We also connect equistarable graphs to the triangle condition, a combinatorial condition known to be necessary (but in general not  sufficient) for equistability.  We show that the triangle condition implies general partitionability for complements of line graphs of forests, and
construct an infinite family of triangle non-equistable graphs
within the class of complements of line graphs of bipartite graphs.
\end{abstract}

\noindent\textbf{Keywords:} equistable graph, general partition graph, bipartite graph, equistarable graph, \hbox{$2$-extendable} graph\\
\noindent\textbf{MSC (2010):} 05C69, 05C50, 05C22, 05C76

\section{Introduction}

In~\cite{MT} Milani\v c and Trotignon established a connection between equistarability and equistability and between $2$-extendable graphs and general partition graphs. (See Section~\ref{sec:Preliminaries} for definitions.) In particular, they proved that: (1) if a graph $G$ is triangle-free, then $G$ is equistarable if and only if $\overline{L(G)}$, the complement of its line graph, is equistable and (2) a connected triangle-free graph $G$ of minimum degree at least~$2$ is $2$-extendable if and only if $\overline{L(G)}$ is a general partition graph. Based on this approach, they disproved Orlin's conjecture~\cite{MM15}, which stated that every equistable graph is a general partition graph.
The counterexamples in~\cite{MT} are based on complements of line graphs of triangle-free graphs, and
that work left open the validity of Orlin's conjecture for the class of complements of line
graphs of bipartite graphs, and more generally for the class of perfect graphs. Regarding other subclasses of perfect graphs, Orlin's conjecture can be easily verified to hold for bipartite graphs and their complements, and was shown to hold for chordal graphs~\cite{Pel_Rot} and for line graphs of bipartite graphs~\cite{Lev_Mil}.

We show in this paper that Orlin's conjecture holds for the complements of line graphs of bipartite graphs.
We achieve this by further extending the connections between a triangle-free graph $G$ and the complement of its line graph $\overline{L(G)}$, translating the properties that $\overline{L(G)}$ is a general partition graph, resp.~a triangle graph, to $G$.
We recall that the general partition property and the triangle property are a sufficient and a necessary condition, respectively, for equistability.
We summarize the connections in Table \ref{tabela}.

\begin{table}[h]
\centering
{\renewcommand{\arraystretch}{1.25}
\begin{tabular}{|c|l|c|}
\multicolumn{1}{c}{property of a triangle-free graph $G$} & \multicolumn{1}{l}{} & \multicolumn{1}{c}{corresponding property of $\overline{L(G)}$}
\tabularnewline
\cline{1-1} \cline{3-3}
\begin{tabular}[c]{@{}c@{}}every connected component of $G$ is\\either a star or $2$-internally extendable
\end{tabular}& $\Longleftrightarrow$ & $\overline{L(G)}$  is general partition\tabularnewline
\cline{1-1} \cline{3-3}
\multicolumn{1}{c}{$\Downarrow$} & \multicolumn{1}{l}{} & \multicolumn{1}{c}{$\Downarrow$}\tabularnewline
\cline{1-1} \cline{3-3}
$G$ is strongly equistarable & $\Longleftrightarrow$ & $\overline{L(G)}$  is strongly equistable\tabularnewline
\cline{1-1} \cline{3-3}
\multicolumn{1}{c}{$\Downarrow$} & \multicolumn{1}{l}{} & \multicolumn{1}{c}{$\Downarrow$}\tabularnewline
\cline{1-1} \cline{3-3}
$G$ is equistarable & $\Longleftrightarrow$ & $\overline{L(G)}$  is equistable\tabularnewline
\cline{1-1} \cline{3-3}
\multicolumn{1}{c}{$\Downarrow$} & \multicolumn{1}{l}{} & \multicolumn{1}{c}{$\Downarrow$}\tabularnewline
\cline{1-1} \cline{3-3}
$G$ is $P_{5}$-constrained & $\Longleftrightarrow$ & $\overline{L(G)}$ is triangle\tabularnewline
\cline{1-1} \cline{3-3}
\end{tabular}
}
\caption{Connections between properties of a triangle-free graph $G$ and $\overline{L(G)}$.}\label{tabela}
\end{table}

We show that, when restricted to the class of bipartite graphs, the upper three classes on the left in Table \ref{tabela} coincide. Moreover, when restricted to the class of forests, all four classes on the left in Table \ref{tabela} coincide.

The paper is structured as follows. Section \ref{sec:Preliminaries} contains the basic definitions and known lemmas
that establish the known equivalences and implications in Table \ref{tabela}. In Section \ref{sec:Basic results} we establish the first and the last equivalence from Table \ref{tabela} and observe that the implications in the left side of the table cannot be reversed.
Sections \ref{sec:Equistarable bipartite} and \ref{sec:forests} contain our main results, that is, a complete characterization of equistarable bipartite graphs and of equistarable forests, along with some algorithmic aspects concerning the recognition of these newly characterized families.

\section{Preliminaries}\label{sec:Preliminaries}

All graphs in the paper will be finite, simple and undirected. For undefined graph theoretic notions, we refer to \cite{west}.
 A {\it stable} or ({\it independent}) set in a graph is a set of pairwise non-adjacent vertices; a stable set is said to be {\it maximal} if it is not contained
 in any other stable set. A {\it clique} in a graph is a set of pairwise adjacent vertices.
We denote by $N(u)$ the set of all neighbors of $u$ and $N(U)=(\cup_{u\in U}N(u))\setminus U$. The {\it degree} of a vertex $u$ in a graph $G$, denoted by $d_G(u)$, is equal to $|N(u)|$. The minimum degree of a graph $G$ is the minimum degree of its vertices and is denoted by $\delta(G)$.

The {\it complement of a graph} $G$ is the graph $\overline{G}$ with the same vertex set as $G$ in which two distinct vertices are adjacent if and only if they are not adjacent in $G$. The {\it line graph} $L(G)$ of $G$ is a graph such that:
$(i)$ the vertex set of $L(G)$ is the edge set of $G$ and
$(ii)$ two distinct vertices of $L(G)$ are adjacent if and only if they share a common endpoint as edges in $G$.
A graph $G$ is {\it bipartite} if its vertex set can be partitioned into two independent sets, and
 {\it triangle-free} if it does not have a triangle ($K_3$) as induced subgraph.
A graph is said to be a {\it star} if it is isomorphic to the complete bipartite graph $K_{1,n}$ for some $n\geq 1$.

A graph $G=(V,E)$ is a {\it general partition graph} if there exists a set $U$ and an assignment of non-empty subsets $U_x\subseteq U$ to the vertices of $G$ such that two vertices $x$ and $y$ are adjacent if and only if $U_x \cap U_y \neq \emptyset$ and for every maximal stable set $S$ of $G$,
the set $\{U_x\, : x \in S\}$ is a partition of $U$~\cite{McAvaney14}.
A graph $G=(V,E)$ is said to be {\it equistable} if and only if there exists a mapping $\varphi : V \rightarrow \mathbb{R}_+$ such that
for all $S \subseteq V$, $S$ is a maximal stable set in $G$ if and only if $\varphi (S):= \sum_{v\in S} \varphi (v) = 1$~\cite{MR553649}.
In 1994 Mahadev et al.~introduced in~\cite{MPS13} a subclass of equistable graphs, the so-called strongly equistable graphs.
For a graph $G$, we denote by $\mathcal{S}(G)$ the set of all maximal stable sets of $G$, and by $\mathcal{T}(G)$ the set of all other nonempty subsets of $V(G)$. A graph is said to be {\it strongly equistable} if for each $T \in \mathcal{T}(G)$ and for each $\gamma \leq 1$ there exists a function  $\varphi:V \rightarrow \mathbb{R}_+$ such that $\varphi(S)=1$ for all $S\in \mathcal{S}(G)$, and $\varphi(T)\neq \gamma$.
The {\it triangle condition} was introduced by McAvaney et al.~in \cite{McAvaney14} and states that for every maximal stable set $S$ in $G = (V, E)$ and every edge $uv$ in $G - S$ there is a vertex $s \in S$ such that $\{u, v, s\}$ induces a triangle in $G$. Graphs satisfying this condition are
called {\it triangle graphs}.

A {\it matching} in a graph $G$ is a set of pairwise disjoint edges. Given a matching $M$ in a graph $G$, we say that a vertex $v$ is {\it covered} (or {\it saturated}) by $M$ if $v$ is an endpoint of an edge in $M$. We will denote by $V(M)$ the set of all endpoints of edges in $M$. Given two matchings $M$ and $M'$ in a graph $G$, we say that $M$ {\it extends to} $M'$ if $M\subseteq M'$.
A matching consisting of exactly $k$ edges will be referred to as a {\it $k$-matching} (where $k$ is the {\it size} of the matching).
A matching $M$ is said to be a {\it perfect matching} of $G$ if it covers all vertices of $G$.
We say that a matching $M$ in a graph $G$ is a {\it perfect internal matching} if every vertex not covered by $M$ is a {\it leaf}, that is, a vertex of degree 1. Perfect internal matchings were studied in a series of papers, see for example \cite{Bar_Gombas,Bar_Mik_ARS,Bar_Mik_IPL} and references cited therein. For a general reference on matching theory, see~\cite{LovPlum}.

A connected graph $G$ is said to be {\it $k$-extendable} if $G$ contains a $k$-matching and every $k$-matching can be extended to a perfect matching~\cite{MR583220}. We generalize this notion as follows.

\begin{defi}\label{def:k-internally ext.}
Given $k\geq 1$, a connected graph $G$ is said to be {\upshape $k$-internally extendable} if $G$ contains a $k$-matching and every $k$-matching extends to a perfect internal matching.
\end{defi}

Even though for the purposes of stating the main results in this paper (Theorems~\ref{thm:main_for_bipartite} and~\ref{thm:main_for_forests}),
it would be more convenient to define a graph $G$ to be $k$-internally extendable simply as a graph in which every $k$-matching extends to a perfect internal matching, we decided to keep the definition more restrictive, requiring also connectedness and the existence of a $k$-matching. This is because this way, the definition is similar to the definition of $k$-extendable graphs; moreover, the two notions coincide for graphs of minimum degree at least $2$. In this paper, we consider $k$-internally extendable graphs only for $k\in \{1,2\}$.

Given a graph $G$ and a vertex $v \in V(G)$, the {\it star rooted at $v$} is the set $E(v)$ of all edges incident with $v$. A {\it star} of $G$ is a star rooted at some vertex $v \in V(G)$ and a star is said to be {\it maximal} if it is not properly contained in any other star. We denote the union of stars from a set of vertices as $E(U):= \cup_{u\in U}E(u)$, where $U\subseteq V(G)$.
In \cite{MT} {\it equistarable graphs} were introduced as graphs $G=(V,E)$ without isolated vertices for which there exist a mapping
$\varphi : E \rightarrow \mathbb{R}^+$ on the edges of $G$ such that a subset $F \subseteq E$ is a maximal star in $G$ if and only if
$\varphi (F) := \sum_{e \in F} \varphi (e) = 1$. Such a mapping $\varphi$ is called an {\it equistarable weight function} of $G$.
Note that for every equistarable weight function $\varphi$, we have $\varphi(e) > 0$ for all $e\in E(G)$, since otherwise
that would directly imply that we would have a total weight of $1$ on some subset of the edges
that does not induce a maximal star in $G$. For a graph $G$ we denote by $\mathcal{S}^*(G)$ the set of all maximal stars of $G$, and by $\mathcal{T}^*(G)$ the set of all other nonempty subsets of $E(G)$. A graph $G=(V,E)$ without isolated vertices is said to be {\it strongly equistarable} if for each $T \in \mathcal{T}^*(G)$ and each $\gamma \leq 1$ there exists a mapping $\varphi : E \rightarrow \mathbb{R}^+$ such that $\varphi (S)=1$ for all $S \in \mathcal{S}^*(G)$, and $\varphi(T)\neq \gamma$ \cite{MT}.
Given a graph $G$ and a subset of edges $F\subseteq E(G)$, the {\it characteristic vector} of $F$ is the vector
$\chi^F\in \{0,1\}^{E(G)}$ defined as $\chi^F_e = 1$ if
$e\in F$, and $\chi^F_e = 0$, otherwise.

We also introduce the following property imposing a constraint on $5$-vertex paths in the graph. A graph $G$
is said to be
{\it $P_5$-constrained} if the middle vertex of every (not necessary induced) $5$-vertex path $P_5$ in $G$ is of degree at least $3$,
that is, it is incident with at least one edge not in the path.

We conclude this section by discussing the validity of Table~\ref{tabela}. The following lemmas from \cite{MT} establish the second and third equivalences in Table \ref{tabela}.

\begin{lem}\label{lem:strongly_equistar-strongly_equistab}
Let $G$ be a triangle-free graph. Then $G$ is strongly equistarable if and only if  $\overline{L(G)}$ is a strongly equistable graph.
\end{lem}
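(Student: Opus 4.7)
The plan is to identify the vertex set of $\overline{L(G)}$ with $E(G)$ and then show that, under this identification, weight functions, maximal stars of $G$, and maximal stable sets of $\overline{L(G)}$ all correspond, after which the defining conditions for strongly equistarable and strongly equistable match term by term.

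The first step will be the combinatorial heart of the argument: the bijection between maximal stars of $G$ and maximal stable sets of $\overline{L(G)}$. Stable sets of $\overline{L(G)}$ are exactly cliques of $L(G)$, i.e.\ families of edges of $G$ that pairwise share an endpoint. I would then invoke triangle-freeness of $G$ to conclude that any three pairwise intersecting edges must in fact share a common vertex (otherwise their six endpoints would collapse to form a triangle), so every clique of $L(G)$ is contained in some star $E(v)$ of $G$. Passing to maximality yields $\mathcal{S}(\overline{L(G)}) = \mathcal{S}^{*}(G)$, and taking complements inside the family of nonempty subsets of $E(G) = V(\overline{L(G)})$ gives $\mathcal{T}(\overline{L(G)}) = \mathcal{T}^{*}(G)$.

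With these identifications in place, the proof reduces to bookkeeping. For the forward direction, given strongly equistarable $G$, a test set $T \in \mathcal{T}(\overline{L(G)})$, and $\gamma \le 1$, I would view $T$ as an element of $\mathcal{T}^{*}(G)$ and apply strong equistarability to obtain $\varphi : E(G) \to \mathbb{R}^{+}$ with $\varphi(F) = 1$ for every maximal star $F$ and $\varphi(T) \ne \gamma$; the same function, viewed as a function on $V(\overline{L(G)})$, witnesses the strongly-equistable condition. The backward direction is entirely symmetric: a strongly equistable weight on $V(\overline{L(G)})$ becomes, via the same identification, a strongly equistarable weight on $E(G)$.

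The main obstacle is the triangle-freeness step used to equate cliques of $L(G)$ with stars of $G$; this is the only place the hypothesis on $G$ enters, and everything else is relabeling. A minor side point worth recording is that a nonnegative equistable weight function is automatically strictly positive on every vertex, since every vertex belongs to some maximal stable set of total weight $1$ (and likewise for equistarable weights, as noted in the preliminaries), so the small notational gap between $\mathbb{R}_{+}$-valued and $\mathbb{R}^{+}$-valued weights in the two definitions causes no difficulty.
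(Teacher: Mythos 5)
Your proof is correct: the identification of maximal stable sets of $\overline{L(G)}$ with maximal cliques of $L(G)$, and hence (via triangle-freeness) with maximal stars of $G$, makes the two ``strong'' definitions coincide verbatim, and the rest is indeed relabeling. The paper itself imports this lemma from~\cite{MT} without proof, but the correspondence you use is exactly the one the paper deploys in its proofs of Lemmas~\ref{lem:co-line-gpg} and~\ref{lem:co-line-triangle}, so your argument is essentially the intended one.
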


\begin{lem}\label{lem:equistar-equistab}
Let $G$ be a triangle-free graph. Then $G$ is equistarable if and only if $\overline{L(G)}$ is equistable.
\end{lem}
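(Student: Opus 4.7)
The plan is to identify the vertex set of $\overline{L(G)}$ with the edge set of $G$, so that a weight function on $E(G)$ and a weight function on $V(\overline{L(G)})$ are literally the same object. Under this identification, the lemma reduces to the combinatorial statement that the maximal stars of $G$ coincide with the maximal stable sets of $\overline{L(G)}$.

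To establish this coincidence, I would first observe that the stable sets of $\overline{L(G)}$ are exactly the cliques of $L(G)$, namely the collections of edges of $G$ that pairwise share an endpoint. For any such collection $F$, either all edges of $F$ share a single common endpoint, so $F$ is contained in a star of $G$, or $F$ contains three edges whose endpoints form a triangle. Triangle-freeness of $G$ rules out the second case, so the stable sets of $\overline{L(G)}$ are precisely the subsets of stars of $G$; taking maximal elements on both sides gives the desired bijective correspondence between maximal stable sets of $\overline{L(G)}$ and maximal stars of $G$.

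For the forward implication, given an equistarable weight function $\varphi : E(G) \to \mathbb{R}^+$, I would take the same function as a candidate weight function $\psi$ on $V(\overline{L(G)})$. For every $S \subseteq V(\overline{L(G)})$, the correspondence above gives $\psi(S) = 1$ if and only if $S$ is a maximal star of $G$, equivalently a maximal stable set of $\overline{L(G)}$. Hence $\psi$ witnesses equistability of $\overline{L(G)}$.

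For the converse, given an equistable weight function $\psi$ of $\overline{L(G)}$, the same transfer produces a candidate equistarable weight function $\varphi$ of $G$, and by the same correspondence $\varphi(F) = 1$ holds exactly when $F$ is a maximal star. The only real obstacle is that equistarability requires strict positivity of $\varphi$, while $\psi$ is only assumed nonnegative. I would dispose of this by the following direct argument: if $\psi(v) = 0$ for some $v \in V(\overline{L(G)})$, extend the stable set $\{v\}$ to a maximal stable set $S$; then $\psi(S \setminus \{v\}) = \psi(S) = 1$, yet $S \setminus \{v\}$ is a stable set properly contained in $S$ and hence not maximal, contradicting the defining property of $\psi$. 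Therefore $\psi$ is automatically strictly positive, so the transferred function $\varphi$ is a valid equistarable weight function for $G$.
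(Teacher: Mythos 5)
Your proof is correct: the identification of maximal stars of $G$ with maximal cliques of $L(G)$ (valid precisely because triangle-freeness excludes the triangle-type cliques of the line graph), and hence with maximal stable sets of $\overline{L(G)}$, together with the positivity argument for the transferred weight function, gives a complete argument. Note that the paper itself does not prove this lemma but imports it from the earlier work of Milani\v{c} and Trotignon \cite{MT}, so there is no in-paper proof to compare against; your route is the natural one and matches the correspondence the paper uses elsewhere (e.g.\ in the proof of Lemma~\ref{lem:co-line-gpg}).
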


The first and the third implication in the right side of Table~\ref{tabela} were proved by Miklavi\v c and Milani\v c in~\cite{MM15} (the third one was
essentially observed already in~\cite{MPS13}), and, as already mentioned, the second implication in the right side of Table~\ref{tabela} was proved by Mahadev et al.~in~\cite{MPS13}. Lemmas \ref{lem:strongly_equistar-strongly_equistab} and~\ref{lem:equistar-equistab} along with the
second implication in the right side of Table~\ref{tabela} directly imply the second implication in the left side
of Table~\ref{tabela}.

\begin{cor}
Every strongly equistarable triangle-free graph $G$ is equistarable.
\end{cor}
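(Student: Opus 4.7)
The plan is to exploit the equivalences already established in Lemmas~\ref{lem:strongly_equistar-strongly_equistab} and~\ref{lem:equistar-equistab} together with the known fact (the second implication on the right of Table~\ref{tabela}, due to Mahadev et al.~\cite{MPS13}) that every strongly equistable graph is equistable. The corollary should follow by chaining these three facts through the line-graph-complement construction.

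Concretely, I would start with an arbitrary triangle-free graph $G$ that is strongly equistarable. First, I would apply Lemma~\ref{lem:strongly_equistar-strongly_equistab} to $G$ to obtain that $\overline{L(G)}$ is strongly equistable. Second, I would invoke the Mahadev--Peled--Sun implication (``strongly equistable $\Rightarrow$ equistable'') to conclude that $\overline{L(G)}$ is equistable. Finally, I would apply Lemma~\ref{lem:equistar-equistab} in its reverse direction to transfer equistability of $\overline{L(G)}$ back to equistarability of $G$. Triangle-freeness of $G$ is used in both applications of the lemmas, and it holds by assumption, so the chain is valid.

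The ``main obstacle'' is essentially a non-obstacle here: all the substantive work has already been carried out in~\cite{MT} (the two lemmas) and in~\cite{MPS13} (strongly equistable implies equistable). The only subtlety worth checking is that Lemmas~\ref{lem:strongly_equistar-strongly_equistab} and~\ref{lem:equistar-equistab} are stated as biconditionals, so both directions (the forward direction from the first lemma and the reverse direction from the second) are available and can be composed. I would therefore write the proof as a one-line chain of implications referring to the three cited results, matching the remark immediately preceding the corollary.
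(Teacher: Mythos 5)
Your proposal is correct and follows exactly the paper's own argument: the corollary is stated there as a direct consequence of Lemmas~\ref{lem:strongly_equistar-strongly_equistab} and~\ref{lem:equistar-equistab} combined with the Mahadev--Peled--Sun implication that strongly equistable graphs are equistable, chained through $\overline{L(G)}$ precisely as you describe. No gaps; the only point worth noting (which you already address) is that the reverse direction of Lemma~\ref{lem:equistar-equistab} is what carries equistability of $\overline{L(G)}$ back to equistarability of $G$.
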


In fact, a straightforward adaptation of either the geometrical proof of the result of Mahadev-Peled-Sun from \cite{MPS13} or the alternative proof given in \cite{MT} to the setting of strongly equistarable graphs shows that the same statement holds for general graphs (not necessary triangle-free).
That is, every strongly equistarable graph is equistarable.

The first and the fourth equivalence in Table~\ref{tabela} will be proved in Lemmas~\ref{lem:co-line-gpg} and~\ref{lem:co-line-triangle}, respectively, in Section~\ref{sec:Basic results}. In turn, this will imply the validity of the first and the third implication in the left side of
Table~\ref{tabela}.

\section{Basic results and examples}\label{sec:Basic results}

As proved by Korach et al.~in~\cite{Korach-Peled-Rotics}, a graph $G$ is equistable if and only if each connected component of $G$ is equistable.
As we show in Lemma~\ref{lem:equistarable components} below, the analogous property for equistarable graphs holds only in one direction.
To this end, recall that in~\cite{MT}
the following property of the graph $H$ depicted in Fig.~\ref{fig:equistarable_notstrongly}
was observed.

\begin{figure}[h!]
  \centering
   \includegraphics[width=50mm]{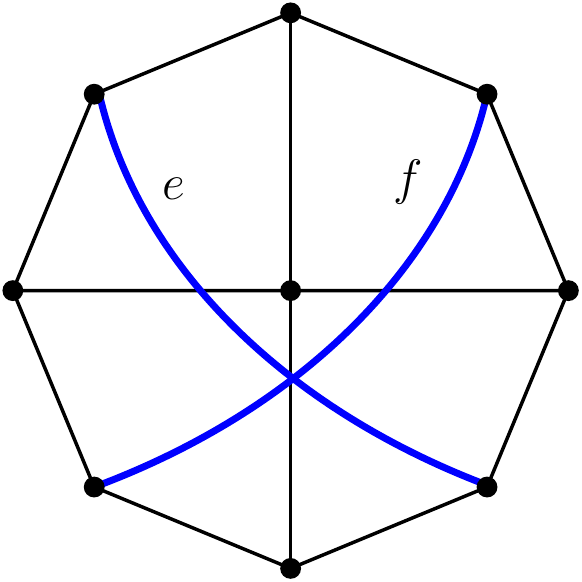}\\
  \caption{A graph $H$ that is equistarable but not strongly equistarable.}\label{fig:equistarable_notstrongly}
\end{figure}

\begin{lem}\label{lem:H}
Let $H$ be the graph depicted in Fig.~\ref{fig:equistarable_notstrongly}, and let $\varphi:E(H)\to \mathbb{R}_+$ be any weight function on the edges of $H$ such that $\varphi(E(v)) = 1$ for every $v\in V(H)$. Then, the $2$-matching $\{e,f\}$ (see Fig.~\ref{fig:equistarable_notstrongly}) satisfies $\varphi(\{e,f\}) = 1/2$.
\end{lem}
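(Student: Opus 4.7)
The plan is to read the constraints $\varphi(E(v)) = 1$ ($v \in V(H)$) as a linear system in the edge-weight variables $\{\varphi(e)\}_{e \in E(H)}$ and to derive $\varphi(e) + \varphi(f) = 1/2$ as a rational linear consequence. A convenient reformulation: for any subset $S \subseteq V(H)$, summing the star equations over $v \in S$ yields
$$\sum_{uv \in E(H)} |\{u,v\} \cap S|\cdot \varphi(uv) \;=\; |S|,$$
so each edge $uv$ appears on the left with coefficient equal to the number of its endpoints in $S$. In other words, $\chi^{E(v)}$ is, as a vector in $\mathbb{R}^{E(H)}$, the column associated to $v$ in the edge--vertex incidence matrix of $H$.

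My first step is therefore to inspect the explicit drawing of $H$ in Fig.~\ref{fig:equistarable_notstrongly} and to identify two vertex subsets $S_1, S_2 \subseteq V(H)$ whose associated signed combination $\sum_{v \in S_1} \varphi(E(v)) - \sum_{v \in S_2} \varphi(E(v))$ cancels the contribution of every edge of $H$ other than $e$ and $f$, while leaving $e$ and $f$ with coefficient $2$, and such that $|S_1| - |S_2| = 1$. Dividing the resulting identity by $2$ then gives exactly $\varphi(e) + \varphi(f) = 1/2$. Equivalently, I am seeking rational coefficients $(\lambda_v)_{v\in V(H)}$ with $\sum_v \lambda_v = 1/2$ realizing the vector identity $\sum_{v \in V(H)} \lambda_v\, \chi^{E(v)} \;=\; \chi^{\{e,f\}}$; the choice $\lambda_v \in \{+\tfrac{1}{2},-\tfrac{1}{2},0\}$ corresponds to the subset description above.

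The main obstacle is the combinatorial identification of the right $S_1, S_2$ on the small graph $H$. This is a finite check that depends on the concrete structure of $H$ — its vertices, its edges, and the placement of the distinguished edges $e, f$ — rather than on any general principle. Once the correct subsets are spotted, the proof reduces to an edge-by-edge endpoint count, and as a safety net one can always set up the full linear system and verify the identity by Gaussian elimination on $H$.
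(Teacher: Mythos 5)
Your proposal is exactly the paper's approach: the published proof expresses $\chi^{\{e,f\}}$ as $\sum_{v\in V(H)}\lambda(v)\,\chi^{E(v)}$ with $\lambda(v)\in\{1/2,-1/2\}$ and $\sum_v\lambda(v)=1/2$, which is precisely your signed-subset combination of the star equations. The only piece you leave unperformed is the finite identification of the subsets, which the paper resolves by taking $\lambda(v)=1/2$ on the larger side of the bipartition of $H-\{e,f\}$ and $\lambda(v)=-1/2$ elsewhere.
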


The statement of Lemma~\ref{lem:H} appeared within the proof of Proposition~2 in~\cite{MT} (where graph $H$ was named $G^*$) and was established using the structure of a basis of the kernel of the incidence matrix of the graph. We offer an alternative, shorter proof, using a method
that we will apply again later (in the proof of Proposition~\ref{prop:Petersen}).

\begin{proof}[Proof of Lemma \ref{lem:H}]
Let $\lambda:V(H)\to \mathbb{R}$ be the mapping as depicted in Fig.~\ref{fig:H}.

%Fig.~\ref{fig:H}.
\begin{figure}[h!]
  \centering
   \includegraphics[width=50mm]{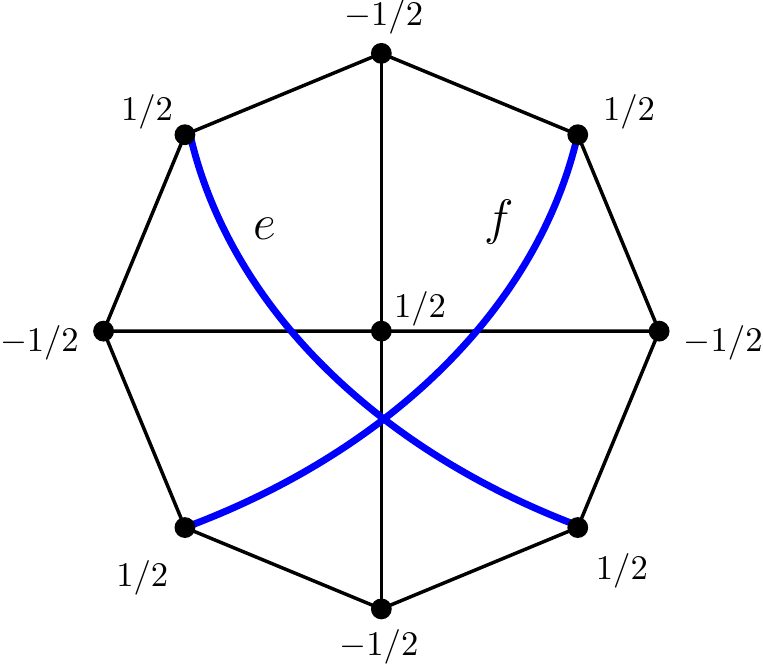}\\
  \caption{The mapping $\lambda:V(H)\to\{-1/2,1/2\}$.}\label{fig:H}
\end{figure}

\noindent That is, vertices in the bigger part of the bipartite graph $H-\{e,f\}$ get assigned weight $1/2$, and all the other vertices get weight $-1/2$.
Using the coefficients given by the mapping $\lambda$, the characteristic vector of the $2$-matching $M = \{e,f\}$ can be expressed as a linear combination of the characteristic vectors of maximal stars, that is,
$\chi^M = \sum_{v\in V(H)}\lambda(v)\cdot \chi^{E(v)}\,.$
Since $\sum_{v\in V(H)}\lambda(v) = 1/2$ and $\varphi(E(v)) = 1$ for all $v\in V(H)$, this implies that $\varphi(M) = 1/2$.
\end{proof}

\begin{lem}\label{lem:equistarable components}
If a graph $G$ is equistarable, then every connected component of $G$ is equistarable.
On the other hand, the class of equistarable graphs is not closed under taking disjoint union.
\end{lem}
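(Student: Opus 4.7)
The plan is to prove the two halves of the statement by quite different routes: the first half by a direct restriction argument, and the second half by producing an explicit counterexample built from the graph $H$ of Fig.~\ref{fig:equistarable_notstrongly} together with Lemma~\ref{lem:H}.

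For the first half, I would fix an equistarable weight function $\varphi:E(G)\to\mathbb{R}^+$, pick an arbitrary connected component $C$ of $G$, and claim that $\varphi|_{E(C)}$ is an equistarable weight function of $C$. The underlying observation is that every star of $G$ consists of edges incident to a single vertex and therefore sits entirely inside $E(C')$ for the component $C'$ containing that vertex; moreover, the containment $E(v)\subsetneq E(u)$ that would prevent $E(v)$ from being a maximal star of $G$ forces $u$ and $v$ to be adjacent and hence to lie in the same component. Consequently, the maximality of a star is a local notion, and the maximal stars of $C$ are exactly the maximal stars of $G$ that lie in $E(C)$. The equistarability condition for $C$ then follows immediately from that of $G$, and I would also note that $C$ has no isolated vertex since $G$ has none.

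For the second half, I would take $G$ to be the disjoint union of two copies $H_1$ and $H_2$ of the graph $H$ from Fig.~\ref{fig:equistarable_notstrongly}, denoting by $\{e_i,f_i\}$ the $2$-matching of $H_i$ corresponding to $\{e,f\}$. Each $H_i$ is equistarable, but $G$ is not: if some $\varphi:E(G)\to\mathbb{R}^+$ were an equistarable weight function for $G$, then by the first half $\varphi|_{E(H_i)}$ would be equistarable for $H_i$, and Lemma~\ref{lem:H} would force $\varphi(\{e_i,f_i\})=1/2$ for $i\in\{1,2\}$. Summing yields $\varphi(\{e_1,f_1,e_2,f_2\})=1$, but this $4$-matching uses edges from two distinct components of $G$ and is therefore not a star of $G$ at all, contradicting equistarability.

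Neither half presents a serious obstacle. The only place that warrants a little care is the opening observation of the first half, namely the identification of the maximal stars of a component $C$ with the maximal stars of $G$ contained in $E(C)$; once this is checked, both halves collapse to one-line computations.
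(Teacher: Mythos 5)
Your proposal is correct and follows essentially the same route as the paper: the first half is the same restriction argument, resting on the observation that maximal stars of a component $C$ coincide with the maximal stars of $G$ contained in $E(C)$ (which you justify in slightly more detail than the paper does), and the second half is the same counterexample, namely the disjoint union of two copies of $H$ containing a $4$-matching forced by Lemma~\ref{lem:H} to have total weight $1$. No gaps.
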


\begin{proof}
If $\varphi$ is an equistarable weight function of a graph $G$ and $C$ is a connected component of $G$, then $\varphi'$, the restriction of $\varphi$ to $E(C)$ is an equistarable weight function of $C$. This follows immediately from the definitions, using the fact that for every subset $F$ of $E(C)$,
$F$ is a maximal star of $C$ if and only if it is a maximal star of $G$.

The fact that the class of equistarable graphs is not closed under taking disjoint union can be now justified using Lemma~\ref{lem:H}. Indeed, the lemma implies that the disjoint union of two copies of $H$ contains a $4$-matching of total weight $1$ in each equistarable weight function, and is hence not equistarable.
\end{proof}

The following consequence of Lemma~\ref{lem:equistarable components} is in contrast with the fact that the class of strongly equistable graphs is closed under join, as proved by Mahadev et al.~in~\cite{MPS13}.

\begin{cor}
The set of equistable graphs is not closed under join.
\end{cor}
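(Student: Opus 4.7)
The plan is to transfer the non-closure result for equistarable graphs under disjoint union (implicit in Lemma~\ref{lem:equistarable components}) to a non-closure result for equistable graphs under join, using Lemma~\ref{lem:equistar-equistab} as the bridge. The point is that the complement-of-line-graph operator interchanges disjoint union with join, so a triangle-free counterexample on one side should automatically yield a counterexample on the other.

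First, I would take the graph $H$ from Fig.~\ref{fig:equistarable_notstrongly}, which is equistarable and (being bipartite) triangle-free, and set $G := H \cup H$, the disjoint union of two copies. By the argument appearing in the proof of Lemma~\ref{lem:equistarable components}, any putative equistarable weight function on $G$ would, by Lemma~\ref{lem:H}, assign weight exactly $1/2$ to the $2$-matching $\{e,f\}$ of each copy of $H$, producing a $4$-matching (not a maximal star of $G$) of total weight $1$. Hence $G$ is not equistarable. Since $G$ is triangle-free, Lemma~\ref{lem:equistar-equistab} then yields that $\overline{L(G)}$ is not equistable.

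Next, I would verify the structural identity
\[
\overline{L(H \cup H)} \;=\; \overline{L(H)\cup L(H)} \;=\; \overline{L(H)} \,+\, \overline{L(H)},
\]
which uses the obvious fact $L(G_1\cup G_2) = L(G_1)\cup L(G_2)$ for vertex-disjoint graphs together with the fact that the complement of a disjoint union is the join. On the other hand, $H$ itself is triangle-free and equistarable, so by Lemma~\ref{lem:equistar-equistab}, $\overline{L(H)}$ is equistable. Therefore $\overline{L(H)} + \overline{L(H)}$ is a join of two equistable graphs that is not equistable, which establishes the corollary.

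There is really no obstacle here beyond routine verification; the key ingredients (Lemma~\ref{lem:H}, Lemma~\ref{lem:equistarable components}, Lemma~\ref{lem:equistar-equistab}, and the union/join duality under $\overline{L(\cdot)}$) have all been assembled in the preceding pages, and the argument is a short formal chain combining them.
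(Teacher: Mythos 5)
Your proposal is correct and follows essentially the same route as the paper: pass from the non-equistarability of $2H$ (via Lemma~\ref{lem:H}) through Lemma~\ref{lem:equistar-equistab} to the non-equistability of $\overline{L(2H)}$, and identify the latter with the join $\overline{L(H)}+\overline{L(H)}$ of two equistable graphs. One small correction: your parenthetical claim that $H$ is bipartite is false --- if it were, Theorem~\ref{thm:main_for_bipartite} would force $H$ to be strongly equistarable, contradicting its role as the standard example of an equistarable, non-strongly-equistarable graph; the property you actually need (and which the paper asserts directly) is only that $H$ is triangle-free, so the argument goes through unchanged.
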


\begin{proof}
Since the graph $H$ depicted in Fig.~\ref{fig:equistarable_notstrongly} is an equistarable triangle-free graph, the complement of its line graph
is equistable (see Table~\ref{tabela}). Let $2H$ denote the disjoint union of two copies of $H$.
It can be seen that the graph $\overline{L(2H)}$ is isomorphic to the join of two copies of $\overline{L(H)}$.
However, since the graph $2H$ is not equistarable, the graph $\overline{L(H)}$ is not equistable.
\end{proof}

The following lemma is from \cite{MT}.

\begin{lem}\label{lem:gen.par-2ext}
Let $G$ be a triangle-free graph with $\delta(G)\geq 2$. Then, $\overline{L(G)}$ is a general partition graph if and only if every $2$-matching of $G$ extends to a perfect matching.
\end{lem}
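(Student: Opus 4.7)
The plan is to leverage the correspondence between stable sets of $\overline{L(G)}$ and stars of $G$. My first step would be to identify the maximal stable sets of $\overline{L(G)}$: a stable set in $\overline{L(G)}$ is a clique in $L(G)$, i.e., a collection of pairwise intersecting edges of $G$; since $G$ is triangle-free, any such collection lies inside some $E(v)$; and the hypotheses $\delta(G)\geq 2$ and triangle-freeness together ensure that every $E(v)$ is in fact maximal (no $E(v)$ can sit inside another $E(u)$ without forcing $d(v)=1$, and extending $E(v)$ by an outside edge would create a triangle on $v$ and two of its neighbors). So the maximal stable sets of $\overline{L(G)}$ are exactly $\{E(v):v\in V(G)\}$.

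For the forward direction, I would start from a general partition representation $\{U_e\subseteq U:e\in E(G)\}$ of $\overline{L(G)}$. Given a $2$-matching $\{e_1,e_2\}$, adjacency in $\overline{L(G)}$ provides an element $u\in U_{e_1}\cap U_{e_2}$. For each vertex $v$, the partition property applied to $E(v)$ picks a unique edge $f(v)\in E(v)$ with $u\in U_{f(v)}$. I claim that $M=\{f(v):v\in V(G)\}$ is a perfect matching extending $\{e_1,e_2\}$: if $f(v)=vw$, then $u\in U_{f(v)}$ together with uniqueness at $w$ forces $f(w)=f(v)$, so $f$ is $2$-to-$1$ onto $M$ and every vertex is covered by exactly one edge of $M$; moreover $e_1,e_2\in M$ follows from uniqueness applied at any of their endpoints.

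For the converse, the natural construction is to take $U$ to be the set of all perfect matchings of $G$ and to set $U_e=\{M\in U:e\in M\}$ for each edge $e$. Since every perfect matching uses exactly one edge incident with each vertex, the family $\{U_e:e\in E(v)\}$ automatically partitions $U$; and two edges $e_1,e_2$ satisfy $U_{e_1}\cap U_{e_2}\neq\emptyset$ iff they lie together in a perfect matching, which by hypothesis is equivalent to $\{e_1,e_2\}$ forming a $2$-matching. The only step requiring genuine work is verifying $U_e\neq\emptyset$ for every edge $e=vw$: using $\delta(G)\geq 2$ one picks $v'\in N(v)\setminus\{w\}$ and then $v''\in N(v')\setminus\{v\}$, and triangle-freeness rules out $v''=w$, so $\{e,v'v''\}$ is a $2$-matching that the hypothesis extends to a perfect matching through $e$.

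The main conceptual step is guessing the right universe $U$ in the converse direction; once perfect matchings themselves are seen to play this role, the rest is essentially bookkeeping, with triangle-freeness and the minimum degree entering only through the identification of maximal stable sets and the small neighbor-chasing argument that places every edge into some $2$-matching.
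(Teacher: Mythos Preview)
Your proof is correct. Note that the paper does not prove this lemma directly---it is quoted from~\cite{MT}---but the paper does prove a generalization (Lemma~\ref{lem:co-line-gpg}), which specializes to the present statement once $\delta(G)\ge 2$ forces perfect internal matchings to coincide with perfect matchings. Comparing against that argument: the paper's route goes through the strong-clique characterization of general partition graphs (Theorem~\ref{thm:strong-cliques}), translating ``every edge of $\overline{L(G)}$ lies in a strong clique'' into ``every $2$-matching of $G$ extends to a perfect (internal) matching''. Your argument instead works straight from the definition of a general partition graph: in the forward direction you read off a perfect matching from a single element $u$ of the universe $U$, and in the converse you manufacture the representation by taking $U$ to be the set of all perfect matchings. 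The payoff of your approach is that it is self-contained and makes the correspondence between elements of $U$ and perfect matchings completely explicit; the payoff of the paper's approach is that the strong-clique criterion handles both implications uniformly and carries over without modification to the more general setting of Lemma~\ref{lem:co-line-gpg}, where leaves are permitted.
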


We now generalize Lemma \ref{lem:gen.par-2ext}, thus establishing the first equivalence in Table \ref{tabela}.
In the proof we will make use of a result by McAvaney et al., which we now state. A {\it strong clique} in a graph $G$ is a clique
containing at least one vertex (equivalently: exactly one vertex) from
each maximal stable set.

\begin{thm}[McAvaney et al.~\cite{McAvaney14}]\label{thm:strong-cliques}
Let $G$ be a graph. Then, $G$ is a general partition graph if and only if every edge of $G$ belongs to a strong clique.
\end{thm}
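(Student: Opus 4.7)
\medskip

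The plan is to prove both directions by switching between the ground-set representation and the strong-clique covering. The forward direction will construct, from an assumed general partition representation, a strong clique through any prescribed edge, while the backward direction will use the strong cliques themselves as the elements of the ground set $U$ in a new general partition representation.

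For the forward direction, assume $G$ has a general partition representation: a ground set $U$ and nonempty subsets $U_x \subseteq U$ assigned to each vertex $x$, with $xy \in E(G) \iff U_x \cap U_y \neq \emptyset$, such that $\{U_x : x \in S\}$ partitions $U$ for every maximal stable set $S$. Given an edge $uv$, pick any element $p \in U_u \cap U_v$ and set $C_p = \{x \in V(G) : p \in U_x\}$. Any two vertices of $C_p$ share $p$, so $C_p$ is a clique containing $uv$. For any maximal stable set $S$, the partition property forces exactly one vertex $x \in S$ to satisfy $p \in U_x$, so $|C_p \cap S| = 1$ and $C_p$ is a strong clique. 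Hence every edge lies in a strong clique.

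For the backward direction, assume every edge of $G$ lies in some strong clique; for each edge $e$ fix one such strong clique $C_e$ and let $\mathcal{C} = \{C_e : e \in E(G)\}$. I take as ground set $U$ one element $u_C$ for each $C \in \mathcal{C}$ (together with a private element for each isolated vertex of $G$, to keep the $U_x$'s nonempty), and define $U_x = \{u_C : x \in C\} \cup \{\text{the private element if } x \text{ is isolated}\}$. If $xy \in E(G)$ then $x,y \in C_{xy}$ gives $u_{C_{xy}} \in U_x \cap U_y$; conversely, if some $u_C \in U_x \cap U_y$, then both $x$ and $y$ lie in the clique $C$, so $xy \in E(G)$. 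Finally, for every maximal stable set $S$ and every $C \in \mathcal{C}$, strongness gives $|C \cap S| = 1$, so $u_C$ appears in exactly one $U_x$ with $x \in S$; isolated vertices in $S$ each contribute their unique private element. Thus $\{U_x : x \in S\}$ is a partition of $U$.

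The only genuine subtlety is the bookkeeping for isolated vertices and the observation, which I would flag in passing, that ``at least one'' and ``exactly one'' coincide in the definition of a strong clique simply because a clique meets any stable set in at most one vertex. Beyond that, the main conceptual step is recognizing that the strong cliques themselves are the natural candidates for the elements of the ground set in the backward direction; once this is seen, the verifications are direct.
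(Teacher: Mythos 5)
Your proof is correct. Note, however, that the paper does not prove this statement at all: it is quoted as a known theorem of McAvaney et al.\ and used as a black box, so there is no proof in the paper to compare against. Your argument is the standard one --- extracting a strong clique $C_p=\{x : p\in U_x\}$ from a shared ground-set element in one direction, and using one chosen strong clique per edge as the ground set in the other --- and the two points you flag (private elements for isolated vertices, and ``at least one'' versus ``exactly one'' in the definition of strong clique) are exactly the right details to handle; both are handled correctly.
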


\begin{lem}\label{lem:co-line-gpg}
For every triangle-free graph $G$, the following conditions are equivalent:
\begin{enumerate}
  \item $\overline{L(G)}$ is a general partition graph.
  \item Every $2$-matching of $G$ extends to a perfect internal matching.
  \item Every component of $G$ is either a star or $2$-internally extendable.
\end{enumerate}
\end{lem}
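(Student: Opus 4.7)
The plan is to prove $(1)\Leftrightarrow(2)$ by invoking Theorem~\ref{thm:strong-cliques} on the graph $\overline{L(G)}$, and then $(2)\Leftrightarrow(3)$ by a component-wise reduction. For the first equivalence, the key translation uses triangle-freeness of $G$: the maximal cliques of $L(G)$ are exactly the maximal stars $E(v)$ of $G$, so the maximal stable sets of $\overline{L(G)}$ correspond bijectively to the maximal stars of $G$; furthermore, edges of $\overline{L(G)}$ correspond to $2$-matchings of $G$, and arbitrary cliques of $\overline{L(G)}$ correspond to matchings of $G$.

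With these identifications in place, a matching $M$ of $G$ is a strong clique of $\overline{L(G)}$ iff $M$ meets every maximal star, iff $M$ covers every vertex $v$ for which $E(v)$ is a maximal star. A direct check shows that $E(v)$ is a maximal star precisely when $v$ is not a leaf, or when $v$ is a leaf in a $K_2$-component. Hence the strong cliques of $\overline{L(G)}$ are exactly the matchings of $G$ that cover every non-leaf and that contain the edge of every $K_2$-component. Theorem~\ref{thm:strong-cliques} then says that $\overline{L(G)}$ is a general partition graph iff every $2$-matching of $G$ extends to such a matching. Any perfect internal matching extending a given $2$-matching can be augmented by the missing $K_2$-component edges without losing the matching property (they live in separate components), which yields the equivalence with $(2)$.

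The direction $(2)\Rightarrow(3)$ proceeds component-wise: if a component $C$ contains a $2$-matching $\{e_1,e_2\}$, view it as a $2$-matching of $G$, extend to a perfect internal matching $M$ of $G$ via $(2)$, and restrict to $C$; and any component without a $2$-matching has all its edges through a common vertex and is therefore a star. For $(3)\Rightarrow(2)$, given a $2$-matching $\{e,f\}$ of $G$, I would assemble a perfect internal matching of $G$ by choosing an appropriate matching in each component and taking the union. The main obstacle arises when a $2$-internally extendable component $C$ contains exactly one of $e,f$, say $e$, because $2$-internal extendability is phrased for $2$-matchings, not for single edges.

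To handle this I would establish the auxiliary claim that, in a connected triangle-free $2$-internally extendable graph, every $1$-matching $\{e\}$ extends to a perfect internal matching. If $e$ lies in some $2$-matching of $C$, the conclusion is immediate. Otherwise every edge of $C$ shares a vertex with $e=uv$; using triangle-freeness one shows that every vertex different from $u,v$ must be a leaf, and that $u,v$ themselves must both be non-leaves (else $C$ would be a star, contradicting $2$-internal extendability), so $\{e\}$ already covers all non-leaves of $C$. The remaining cases---a component disjoint from $\{e,f\}$, a component containing both edges, and a star component containing exactly one of them---are handled directly: take an arbitrary $2$-matching and extend, invoke $2$-internal extendability on $\{e,f\}$, and use the single edge $\{e\}$ (which covers the center of the star), respectively. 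The triangle-free hypothesis is what makes both the strong clique translation and the auxiliary claim work, and is the essential ingredient enabling the generalization of Lemma~\ref{lem:gen.par-2ext}.
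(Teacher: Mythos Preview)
Your proposal is correct and follows essentially the same approach as the paper: both prove $(1)\Leftrightarrow(2)$ via Theorem~\ref{thm:strong-cliques} by identifying strong cliques of $\overline{L(G)}$ with perfect internal matchings of $G$ (augmented by the $K_2$-edges), and both prove $(2)\Leftrightarrow(3)$ by a component-wise argument whose only nontrivial point is the auxiliary claim that in a triangle-free component, an edge that does not lie in any $2$-matching already forms a perfect internal matching of that component. One small omission: in your treatment of $(3)\Rightarrow(2)$, when you say that for a component disjoint from $\{e,f\}$ you ``take an arbitrary $2$-matching and extend,'' you should also note that a \emph{star} component disjoint from $\{e,f\}$ has no $2$-matching but is handled trivially since any one of its edges is a perfect internal matching; the paper covers this uniformly by first observing that every component (star or $2$-internally extendable) possesses a perfect internal matching $M_C$.
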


\begin{proof}
We first show the equivalence of conditions 1 and 2, and then the equivalence of conditions 2 and 3.

For $1\Leftrightarrow 2$, we first use Theorem~\ref{thm:strong-cliques}
to infer that it suffices to show that
every edge of  $\overline{L(G)}$ belongs to a strong clique
if and only if
every $2$-matching of $G$ extends to a perfect internal matching.
It follows from the definitions of the complement and the line graph operators that
a subset $M\subseteq E(G)$ is a $2$-matching of $G$ if and only if it forms an edge of $\overline{L(G)}$.
Moreover, a set $F\subseteq E(G) = V(\overline{L(G)})$ forms a strong clique in $\overline{L(G)}$ if and only if
$F$ forms a stable set in $L(G)$ intersecting all maximal cliques of $L(G)$.
Since $G$ is triangle-free, maximal cliques of $L(G)$ correspond bijectively to the maximal stars of $G$.
Therefore, a stable set $F$ in $L(G)$ intersecting all maximal cliques of $L(G)$ is a matching of $G$ intersecting all
maximal stars of $G$. Equivalently, $F$ is a perfect internal matching of $G$ that in addition contains all unique edges of components
of $G$ isomorphic to $K_2$ (since the stars rooted at any vertex of the $K_2$ are maximal stars).
%Moreover, a subset $M\subseteq E(G)$ forms a perfect internal matching of $G$
%if and only if $M$ intersects all stars rooted  at vertices of degree at least $2$.

The above implies that every edge of  $\overline{L(G)}$ belongs to a strong clique
if and only if every $2$-matching of $G$ extends to a
perfect internal matching that contains all unique edges of components
of $G$ isomorphic to $K_2$. This last condition is easily seen to be equivalent to
the condition that every $2$-matching of $G$ extends to a
perfect internal matching, and establishes the equivalence  $1\Leftrightarrow 2$.

To prove $2\Rightarrow 3$, suppose that every $2$-matching of $G$ extends to a perfect internal matching, let $C$ be a component of $G$ that is not a star, and let $M$ be a $2$-matching in $G$. By assumption, $M$ extends to a perfect internal matching $M'$. Then,
matching $M'\cap E(C)$ is a perfect internal matching of $C$ extending $M$.

It remains to show $3\Rightarrow 2$. Suppose that every connected component of $G$ is either a star or $2$-internally extendable.
Then, each component $C$ of $G$ contains a perfect internal matching, say $M_C$.
Let $M= \{e,f\}$ be a $2$-matching in $G$.
If $M$ is contained in a single connected component of $G$, say $C$, then
$M$ is contained in a perfect internal matching of $C$, say $M'$, and a perfect internal matching
of $G$ extending $M$ is given by  $M'\cup \bigcup_{C'\in {\cal C}\setminus\{C\}}M_{C'}$, where ${\cal C}$ denotes the set of all connected components of $G$.

Suppose now that the two edges of $M$ belong to different connected components of $G$, say $e\in C_e$ and $f\in C_f$.
We claim that $e$ is contained in an internal perfect matching of $C_e$ (and then by symmetry, $f$ is contained in an internal perfect matching of $C_f$). If $e$ extends to a $2$-matching of $C_e$, then we can apply the assumption that $C_e$ is $2$-internally extendable, and the claim follows.
So suppose that $\{e\}$ is a maximal matching in $C_e$. Then, the set $V(C_e)\setminus \{x,y\}$, where $e = xy$, is an independent set
in $C_f$. Since $C_e$ is triangle-free, every vertex of  $V(C_e)\setminus \{x,y\}$ is of degree~$1$ in $C_e$. But this implies that $\{e\}$ itself  is a perfect internal matching of $C_e$.

Denoting by $M_e$ and $M_f$ internal perfect matchings of $C_e$ and $C_f$ containing $e$ and $f$, respectively,
a perfect internal matching of $G$ extending $M$ is given by  $M_e\cup M_f\cup \bigcup_{C'\in {\cal C}\setminus\{C_e,C_f\}}M_{C'}$.
This establishes the implication $3\Rightarrow 2$ and completes the proof.
\end{proof}

The last equivalence from Table \ref{tabela} is proved in the next lemma.
Recall that a graph is said to be
{\it $P_5$-constrained} if the middle vertex of every (not necessary induced) $5$-vertex path $P_5$ in $G$ is incident with at least
one edge not in the path.

\begin{lem}\label{lem:co-line-triangle}
Let $G$ be a triangle-free graph with at least one edge. Then $\overline{L(G)}$ satisfies the triangle condition if and only if $G$ is $P_5$-constrained.
\end{lem}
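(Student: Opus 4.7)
The plan is to translate the triangle condition on $\overline{L(G)}$ back into a combinatorial condition on $G$, and then to verify directly that it is equivalent to $P_5$-constrainedness. First I would observe that the vertices of $\overline{L(G)}$ are the edges of $G$, with two being adjacent iff they are vertex-disjoint in $G$. Since $G$ is triangle-free, every clique of $L(G)$ is a star, so the maximal stable sets of $\overline{L(G)}$ are precisely the maximal stars of $G$: namely $E(v)$ for each vertex $v$ with $d_G(v)\ge 2$, together with the singleton $\{e\}$ for each isolated edge $e$. An edge of $\overline{L(G)}-S$ is a $2$-matching $\{e_1,e_2\}$ of $G$ with $e_1,e_2\notin S$. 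Unfolding the triangle condition gives the following equivalent statement on $G$: for every $v\in V(G)$ with $d_G(v)\ge 2$ and every $2$-matching $\{e_1,e_2\}$ in $G$ with $v\notin V(\{e_1,e_2\})$, some neighbor $w$ of $v$ lies outside $V(\{e_1,e_2\})$. The isolated-edge maximal stars impose no condition, since an isolated edge is vertex-disjoint from every other edge of $G$.

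The forward direction is then immediate: given a (not necessarily induced) path $v_1v_2v_3v_4v_5$ in $G$, apply the translated condition to $v:=v_3$ (which has degree at least $2$) and the $2$-matching $\{v_1v_2,\, v_4v_5\}$ (which avoids $v_3$), obtaining a neighbor $w$ of $v_3$ outside $\{v_1,v_2,v_4,v_5\}$, and in particular outside $\{v_2,v_4\}$; this witnesses the required degree at least $3$ at $v_3$.

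For the reverse direction I would argue by contradiction: assume the translated condition fails, so there exist $v$ with $d_G(v)\ge 2$ and a $2$-matching $\{x_1y_1,\, x_2y_2\}$ of $G$ with $v\notin V(\{x_1y_1,x_2y_2\})$ and $N(v)\subseteq \{x_1,y_1,x_2,y_2\}$. The triangle-free hypothesis is crucial: $v$ cannot be adjacent to both endpoints of a single edge $x_iy_i$ without producing a triangle, so $|N(v)\cap\{x_i,y_i\}|\le 1$ for $i=1,2$. Combined with $d_G(v)\ge 2$, this forces $d_G(v)=2$ with exactly one neighbor on each edge, say $N(v)=\{y_1,x_2\}$ up to relabelling. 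Then $x_1y_1vx_2y_2$ is a $P_5$ in $G$ whose middle vertex $v$ has degree $2$, contradicting $P_5$-constrainedness. I expect the main step to be the initial translation: once one correctly identifies the maximal stable sets of $\overline{L(G)}$ via triangle-freeness and checks that the isolated-edge case contributes nothing, both directions become short case analyses in which triangle-freeness again pins down the critical configuration.
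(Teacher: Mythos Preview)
Your proposal is correct and follows essentially the same approach as the paper: both proofs translate the triangle condition on $\overline{L(G)}$ into the statement that for every center $v$ of a maximal star and every $2$-matching avoiding $v$, some neighbor of $v$ is uncovered, and then match this against the $P_5$-constrained definition using triangle-freeness to bound $|N(v)\cap\{x_i,y_i\}|$. The only cosmetic difference is that the paper runs the reverse direction as a direct case split on $d_G(v)\in\{1,2,\ge 3\}$, whereas you argue the contrapositive and let triangle-freeness force $d_G(v)=2$ immediately; the content is the same.
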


\begin{proof}
Suppose first that $\overline{L(G)}$ satisfies the triangle condition but $G$ is not $P_5$-constrained. Therefore there exists a $P_5=(v_1, \dots, v_5)$ in $G$ where $d_G(v_3)=2$. The set $E(v_3)=\{v_2v_3,v_3v_4\}$ is a maximal star in $G$, therefore in $\overline{L(G)}$ it is a maximal stable set, because $G$ is triangle-free. Let us introduce $e = v_1v_2$ and $f = v_4v_5$. Since in the graph $\overline{L(G)}$ the vertex $v_2v_3 \in V(\overline{L(G)})$ is adjacent to $f$ but not to $e$ and $v_3v_4 \in V(\overline{L(G)})$ is adjacent to $e$ but not to $f$, we conclude that the edge $\{e,f\}$ in $\overline{L(G)}$ can not be extended to a triangle with a vertex from the maximal stable set $E(v_3)$, a contradiction with the triangle condition.

Suppose now that $G$ is $P_5$-constrained. We will verify that the triangle condition holds for $\overline{L(G)}$.
Take a maximal stable set $S$ in $\overline{L(G)}$ and a pair of adjacent vertices $e,f \in V(\overline{L(G)})\setminus S$.
Note that $S$ is a maximal star in $G$, centered at some vertex $v$ and $\{e,f\}$ is a $2$-matching in $G$. Note also that since $e,f \notin S$, these two edges are not incident with $v$ in $G$.
It is enough to show that there exists an edge $g\in S\subseteq E(G)$ such that $\{e,f,g\}$ form a $3$-matching in $G$.
Equivalently, we need to show that $v$ has a neighbor not covered by the matching $\{e,f\}$.
If $d_G(v)=1$, that is, $|S| = 1$, then, since $S$ is a maximal star, the connected component
of $G$ containing vertex $v$ is a single edge, $g\in E(G)$. Thus, no edge of $G$ is incident with $g$, and hence
$\{e,f,g\}$ is a $3$-matching. If $d_G(v)= 2$, then, since $G$ is triangle-free and $P_5$-constrained, at most one neighbor of $v$ is covered by $\{e,f\}$, hence in this case
there exists an uncovered neighbor of $v$. Similarly, if $d_G(v)\ge 3$, then the triangle-freeness implies that $e$ and $f$ cover at most two neighbors of $v$, and again there exists an uncovered one.
So in all cases we can extend $\{e,f\}$ to a $3$-matching using an edge incident with $v$.
This shows that  $\overline{L(G)}$ satisfies the triangle condition.
\end{proof}

\subsection{Counterexamples to converses of the implications in Table~\ref{tabela}}

As explained at the end of Section~\ref{sec:Preliminaries}, Lemmas~\ref{lem:co-line-gpg} and~\ref{lem:co-line-triangle} together with previously known results suffice to justify all implications and equivalencies in Table~\ref{tabela}.
In~\cite{MM15}, circulant graphs of the form  $C_{n}(\{1,3\})$ for odd $n\ge 11$ were given as examples of a connected triangle-free strongly equistarable graphs that are not $2$-extendable; thus, since $4$-regular, these graphs are also not $2$-internally extendable. This shows that the first implication on either side of Table~\ref{tabela} cannot be reversed.
The graph $H$ depicted in Fig.~\ref{fig:equistarable_notstrongly} was given in~\cite{MM15} as an example of
an equistarable but not strongly equistarable graph. Note that the graph is triangle-free, and hence shows that
the second implication on either side of Table~\ref{tabela} cannot be reversed.

What about the third implication on either side of Table~\ref{tabela}? Regarding examples of non-equistable graphs satisfying the triangle condition,
all the examples known so far (to us) can be found in \cite{MM15}. These are:
\begin{itemize}
  \item A specific example discovered already by DeTemple et al.~\cite{MR1212882}: a $9$-vertex graph $G^*$ given by $V(G^*)=\{1,\dots,9\}$ and by the family of its maximal stable sets ${\cal S}(G)=\{\{1,2,3\},\{4,5,6\}$, $\{7,8,9\},\{1,4,7\},\{3,6,9\}\}$.
  \item An infinite family consisting of tensor product graphs of the form $K_m\times K_n$ with \hbox{$m>n\ge 3$}.
\end{itemize}
(Recall that the {\it tensor product} of two graphs $G$ and $H$ is the graph $G\times H$ with vertex set $V(G)\times V(H)$ in which two vertices $(u_1,v_1)$ and $(u_2,v_2)$ are adjacent if and only if $u_1v_1\in E(G)$ and $u_2v_2\in E(H)$.)
It is an easy exercise to verify that for each $m,n$, we have
\hbox{$K_m\times K_n \cong \overline{L(K_{m,n})}$}
 where $\cong$ denotes the graph isomorphism relation. Therefore, since the graphs $K_{m,n}$ are triangle-free, Lemma~\ref{lem:co-line-triangle} implies that the graphs $K_{m,n}$ for $m>n\ge 3$ are $P_5$-constrained and non-equistarable.
In the next proposition, we offer a short direct proof of this fact.

\begin{prop}
For every $m > n \geq 3$, the complete bipartite graph $K_{m,n}$ is $P_5$-constrained and not equistarable.
\end{prop}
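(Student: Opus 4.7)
The plan is to handle the two claims separately, both via a short direct argument. Throughout I denote the bipartition of $K_{m,n}$ by $A\cup B$ with $|A|=m$ and $|B|=n$.

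For the $P_5$-constrained part, I would simply observe that $\delta(K_{m,n})=\min\{m,n\}=n\ge 3$. Hence for any (not necessarily induced) $5$-vertex path $v_1v_2v_3v_4v_5$ in $K_{m,n}$, the middle vertex $v_3$ has degree at least $3$, while only the two path edges $v_2v_3$ and $v_3v_4$ are incident with $v_3$ inside the path. Thus $v_3$ is incident with at least one edge outside the path, which is exactly the $P_5$-constrained condition.

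For the non-equistarability, the key point is that the hypothesis $m,n\ge 3$ forces every single-vertex star to be maximal: if $v\in V(K_{m,n})$ has degree $d\ge 2$, then the unique vertex incident with all edges of $E(v)$ is $v$ itself, so $E(v)$ is a maximal star; and $d\ge 3>1$ here. Now suppose for contradiction that $\varphi\colon E(K_{m,n})\to \mathbb{R}^+$ is an equistarable weight function. Then $\varphi(E(v))=1$ for every vertex $v$. I would then apply a double-counting argument: since every edge of the bipartite graph has exactly one endpoint in $A$ and exactly one endpoint in $B$,
\[
m \;=\; \sum_{a\in A}\varphi(E(a)) \;=\; \sum_{e\in E(K_{m,n})}\varphi(e) \;=\; \sum_{b\in B}\varphi(E(b)) \;=\; n,
\]
contradicting $m>n$. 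Hence $K_{m,n}$ is not equistarable.

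There is essentially no obstacle here: the whole proof rests on the observation that in a bipartite graph each edge contributes once to the sum of $\varphi(E(v))$ over either side. The only subtlety, and the one genuine hypothesis that must be checked, is that the condition $m,n\ge 3$ guarantees that every $E(v)$ is a maximal star, so that the equistarable weight function assigns total weight exactly $1$ to each of them.
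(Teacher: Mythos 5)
Your proof is correct and follows essentially the same route as the paper's: the minimum-degree observation for the $P_5$-constrained part, and the double count $m=\varphi(E(A))=\varphi(E)=\varphi(E(B))=n$ for non-equistarability. The only difference is that you spell out in more detail why each star $E(v)$ is maximal, which the paper leaves as an immediate remark.
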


\begin{proof}
Let $G=(A\cup B, E)=K_{n,m}$ with $3\leq n < m$.
Since $\delta(G)=n\geq 3$ the graph $G$ is $P_5$-constrained.
Suppose $G$ is equistarable and
fix an equistarable weight function $\varphi:E(G)\to \mathbb{R}_+$.
Note that every vertex of $G$ is a center of a maximal star. Hence,
$n = \varphi(E(A)) = \varphi(E) = \varphi(E(B)) = m$,
a contradiction.
\end{proof}

The specific example $G^*$ mentioned above was introduced by DeTemple et al.~in~\cite{MR1212882} as the intersection graph of a system of chords on a circle (represented on the left in Fig.~\ref{fig:system of   chords and bipartite graph}).
\begin{figure}[h!]
  \centering
   \includegraphics[width=100mm]{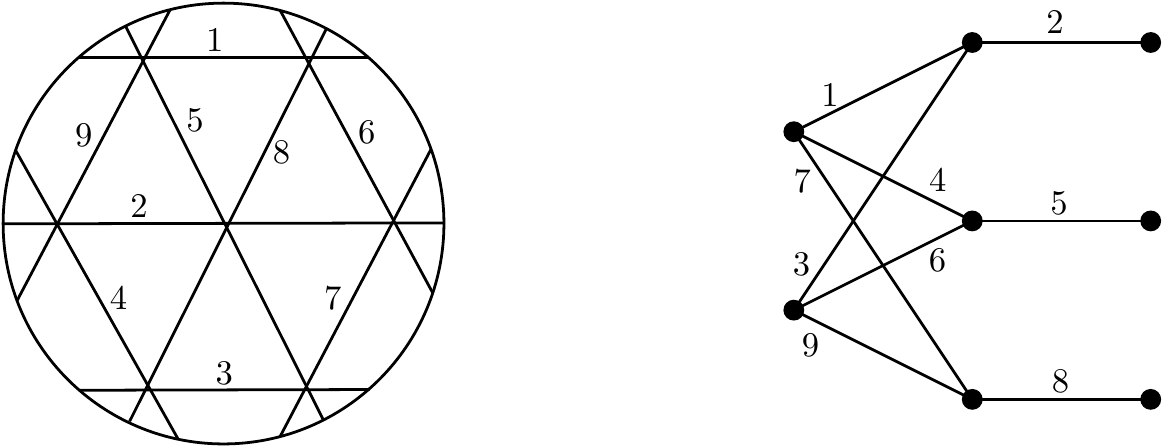}\\
  \caption{Left: a system of chords in a circle, the intersection graph of which is $G^* = \overline{L(K_{2,3}^+)}$; right: $K_{2,3}^+$.}\label{fig:system of chords and bipartite graph}
\end{figure}
It turns out that $G^*$ is also of the form $G^* = \overline{L(G)}$ where $G$ is a
bipartite $P_5$-constrained non-equistarable graph.
In fact, such $G$ is the smallest member of an infinite family ${\cal B}$ of
bipartite $P_5$-constrained non-equistarable graphs, which we define now.
Let $K_{m,n}$ be the complete bipartite graph with a fixed bipartition $\{A,B\}$ of its vertex set with $|A|=m$ and $|B|=n$.
Let $K_{m,n}^+$ be a graph obtained from $K_{m,n}$ by adding to each vertex from set $B$ a private neighbor (a leaf).
Then ${\cal B}=\{K_{m,n}^+ \mid 3\leq n \leq m+1\}$. The graph $K_{2,3}^+$ is shown in Fig.~\ref{fig:system of   chords and bipartite graph}.

\begin{prop}
Each graph in ${\cal B}$ is $P_5$-constrained and not equistarable.
\end{prop}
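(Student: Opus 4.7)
The plan is to verify the two claims separately and then observe the structural reason behind the non-equistarability. For the $P_5$-constrained property, I would simply check the degree of every vertex of $K_{m,n}^+$: vertices in $A$ have degree $n \geq 3$, vertices in $B$ have degree $m+1 \geq n \geq 3$, and the private leaves have degree $1$. Since only vertices of degree at least $2$ can serve as middle vertices of a $P_5$, the leaves are automatically excluded, and every remaining candidate already has degree at least $3$.

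For the non-equistarability part, I would first identify the maximal stars of $K_{m,n}^+$. The star rooted at any added leaf consists of a single edge and is properly contained in the star rooted at its $B$-neighbor, so it is not maximal; on the other hand, $E(a)$ for $a\in A$ and $E(b)$ for $b\in B$ are readily checked to be maximal. Assuming that a positive equistarable weight function $\varphi$ exists, I would add up the equalities $\varphi(E(a))=1$ over $a\in A$ and $\varphi(E(b))=1$ over $b\in B$. Every bipartite edge is counted exactly once on each side, so the two totals give $\sum_{e\in E(K_{m,n})}\varphi(e)=m$ and $\sum_{e\in E(K_{m,n})}\varphi(e) + \sum_{b\in B}\varphi(b\ell_b) = n$, whence $\sum_{b\in B}\varphi(b\ell_b) = n-m$.

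At this point the argument splits into two cases. If $n\leq m$, the right-hand side is non-positive, contradicting positivity of $\varphi$ on leaf edges. If $n=m+1$, the set $F=\{b\ell_b : b\in B\}$ satisfies $\varphi(F)=1$ while $|F|=n\geq 3$, so $F$ is a matching of more than one edge and cannot be a star, contradicting the equistarability condition. The only subtle step I foresee is recognizing that the set of leaf edges is the correct witness for the boundary case $n=m+1$; this is really the observation that $\chi^F = \sum_{b\in B}\chi^{E(b)} - \sum_{a\in A}\chi^{E(a)}$ with coefficient sum $1$, precisely in the spirit of the linear-combination argument used in Lemma~\ref{lem:H}.
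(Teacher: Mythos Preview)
Your proof is correct and follows essentially the same approach as the paper: check that no vertex has degree~$2$ for the $P_5$-constrained property, then sum the maximal-star equalities over $A$ and over $B$ to force $\varphi(E(L))=n-m$, contradicting positivity when $n\le m$ and yielding a non-star set of weight~$1$ when $n=m+1$. Your write-up is slightly more explicit about which stars are maximal and about the case split, and the closing remark linking $F$ to a linear combination of star characteristic vectors is a nice touch, but the underlying argument is identical.
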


\begin{proof}
Let $G \in {\cal B}$, with bipartition $\{A',B\}$ such that $A'=A\cup L$, where $L$ is the set of leaves and $|A|= m$, $|B|=|L|= n$.
Since $n\geq 3$, we do not have any vertices of degree $2$, so $G$ is $P_5$-constrained.
Suppose that $G$ is equistarable and fix an equistarable weight function $\varphi:E(G)\to \mathbb{R}_+$.
Note that every vertex of $A\cup B$ is the center of a maximal star.
Summing up the maximal stars from each partition we get
$n = \varphi(E(B)) = \varphi (E(A)) + \varphi (E(L)) = m + \varphi (E(L))$
implying $\varphi (E(L)) = n - m$.
Since $\varphi(e)> 0$ for all $e\in E(G)$, we have $\varphi (E(L)) > 0$ and therefore $n>m$, which together with
$n\le m+1$ implies $n = m+1$ and hence $\varphi (E(L)) = 1$. This is
a contradiction since the set of edges in $E(L)$ does not induce a star in $G$.
\end{proof}

All the above examples of $P_5$-constrained non-equistarable triangle-free graphs are bipartite.
In the next proposition we exhibit a graph that is not of this form.

\begin{prop}\label{prop:Petersen}
The Petersen graph is triangle-free, $P_5$-constrained, and not equistarable.
\end{prop}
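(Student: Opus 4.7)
My plan for the Petersen graph $G$ is as follows. Triangle-freeness is immediate, since the Petersen graph has girth $5$, and the $P_5$-constrained property is automatic from the fact that $G$ is $3$-regular: the middle vertex of any $5$-vertex path in $G$ has degree $3$, two of whose incident edges lie on the path and the third of which does not.

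To prove that $G$ is not equistarable, I would follow the strategy of the proof of Lemma~\ref{lem:H}: I would search for a function $\lambda : V(G) \to \mathbb{R}$ with the two properties that $\sum_{v} \lambda(v)\,\chi^{E(v)} = \chi^F$ for some edge set $F \subseteq E(G)$ that is \emph{not} a maximal star of $G$, and $\sum_{v} \lambda(v) = 1$. Given such a $\lambda$, the argument closes itself: any equistarable weight function $\varphi : E(G) \to \mathbb{R}_+$ satisfies $\varphi(E(v)) = 1$ for each $v \in V(G)$, so
\[
\varphi(F) \;=\; \sum_{v \in V(G)} \lambda(v)\,\varphi(E(v)) \;=\; \sum_{v \in V(G)} \lambda(v) \;=\; 1,
\]
which is impossible because $F$ is not a maximal star.

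The main obstacle is to pin down the labelling $\lambda$. I would exploit the description of $G$ as the Kneser graph $K(5,2)$, whose vertices are the $2$-subsets of $[5] = \{1,\dots,5\}$ and whose edges join disjoint pairs. Setting $\lambda(\{a,b\}) = 1/2$ if $1 \notin \{a,b\}$ and $\lambda(\{a,b\}) = -1/2$ otherwise, a short calculation gives $\sum_{v} \lambda(v) = 6\cdot(1/2) + 4\cdot(-1/2) = 1$. Moreover, since any edge of $K(5,2)$ joins two disjoint pairs, the element $1$ belongs to at most one endpoint of any edge, so $\lambda(u) + \lambda(v) \in \{0,1\}$ for every edge $uv$, taking the value $1$ precisely when $1 \notin u \cup v$. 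Thus the associated set $F$ consists of the three edges $\{2,3\}\text{-}\{4,5\}$, $\{2,4\}\text{-}\{3,5\}$, $\{2,5\}\text{-}\{3,4\}$, which form a $3$-matching and are therefore not a maximal star of $G$. The choice of $\lambda$ is guided by the observation that functions of the form $\lambda(\{a,b\}) = x_a + x_b$ turn the edge condition $\lambda(u)+\lambda(v)\in\{0,1\}$ into the scalar constraints $x_i \in \{X, X-1\}$ with $X = \sum_i x_i$, and the case in which exactly one coordinate equals $X-1$ automatically forces $\sum_v \lambda(v) = 1$.
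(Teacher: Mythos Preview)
Your proof is correct and follows essentially the same approach as the paper's: both show non-equistarability by exhibiting a $\pm\tfrac{1}{2}$ vertex labelling $\lambda$ with $\sum_v\lambda(v)=1$ whose associated edge set is an induced $3$-matching, and the triangle-free and $P_5$-constrained parts are handled identically. Your Kneser-graph coordinates simply make explicit the same $3$-matching and labelling that the paper specifies via a picture; in fact your six $+\tfrac{1}{2}$ vertices are exactly $V(M)$ and your four $-\tfrac{1}{2}$ vertices form the complementary independent set, matching the paper's definition $\lambda(v)=\tfrac12$ for $v\in V(M)$ and $\lambda(v)=-\tfrac12$ otherwise.
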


\begin{proof}
Let $G$ be the Petersen graph. Clearly, $G$ is triangle-free as well as $P_5$-constrained (since it is $3$-regular).
Suppose for a contradiction that $G$ is equistarable and let $\varphi:E(G)\to \mathbb{R}_+$ be an equistarable weight function of $G$.
Fix a $3$-matching $M$ of $G$ such that the subgraph of $G$ induced by $M$ is $1$-regular (for example, let $M$ consist of the three thick edges as in Fig.~\ref{fig:Petersen}), and let $\lambda:V(G)\to \mathbb{R}$ be the mapping given by
$\lambda(v) = \left\{
                         \begin{array}{ll}
                           1/2, & \hbox{if $v\in V(M)$;} \\
                           -1/2, & \hbox{otherwise.}
                         \end{array}
                       \right.$
\begin{figure}[h!]
  \centering
   \includegraphics[width=60mm]{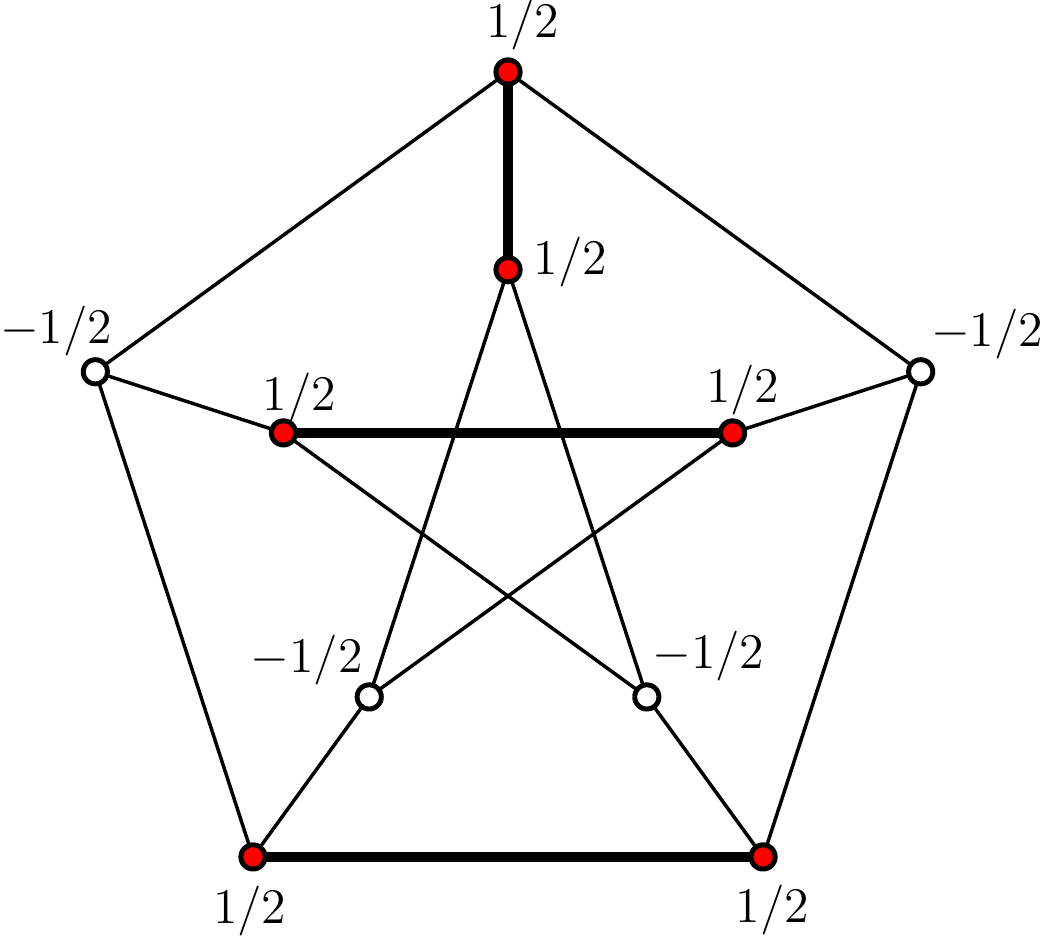}\\
  \caption{The Petersen graph together with a $3$-matching and a $\{1/2, -1/2\}$-weighting of its vertices
  }\label{fig:Petersen}
\end{figure}

To complete the proof, observe that the characteristic vector of $M$ can be expressed
               in the form $$\chi^M = \sum_{v\in V(G)}\lambda(v)\cdot \chi^{E(v)}\,.$$
However, since $\sum_{v\in V(G)}\lambda(v) = 1$, this means that we have expressed
the characteristic vector of $M$
as an affine combination of the characteristic vectors of the (maximal) stars of $G$.
Since $\varphi(E(v)) = 1$ for all $v\in V(G)$, this implies that $\varphi(M) = 1$, contrary to the fact that $\varphi$ is an equistarable weight function of $G$ and $M$ is not a star. This shows that the Petersen graph is not equistarable.
\end{proof}

\begin{cor}
The complement of the line graph of the Petersen graph is a non-equistable graph satisfying the triangle condition.
\end{cor}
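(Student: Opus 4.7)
The statement is an immediate corollary of the preceding Proposition combined with two lemmas already established in the excerpt, so the plan is essentially to assemble these pieces.

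First I would invoke Proposition~\ref{prop:Petersen}, which asserts that the Petersen graph $G$ is triangle-free, $P_5$-constrained, and not equistarable. All three attributes will be needed. Triangle-freeness is the common hypothesis required to translate properties of $G$ to properties of $\overline{L(G)}$ via the equivalences in Table~\ref{tabela}.

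Next, to conclude that $\overline{L(G)}$ is not equistable, I would apply Lemma~\ref{lem:equistar-equistab}: for a triangle-free graph $G$, equistarability of $G$ is equivalent to equistability of $\overline{L(G)}$. Since $G$ is not equistarable, $\overline{L(G)}$ is not equistable.

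Finally, to conclude that $\overline{L(G)}$ satisfies the triangle condition, I would apply Lemma~\ref{lem:co-line-triangle}: for a triangle-free graph $G$ with at least one edge, $\overline{L(G)}$ satisfies the triangle condition if and only if $G$ is $P_5$-constrained. The Petersen graph certainly has edges and, by Proposition~\ref{prop:Petersen}, is $P_5$-constrained, so $\overline{L(G)}$ satisfies the triangle condition. There is no real obstacle here; the only thing to check is that the two lemmas can be applied, which amounts to verifying the triangle-free hypothesis, and this was already part of Proposition~\ref{prop:Petersen}.
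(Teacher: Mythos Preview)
Your proposal is correct and matches the paper's approach: the corollary is stated without proof there, precisely because it follows immediately from Proposition~\ref{prop:Petersen} together with Lemmas~\ref{lem:equistar-equistab} and~\ref{lem:co-line-triangle}, exactly as you outline.
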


\section{Equistarable bipartite graphs}\label{sec:Equistarable bipartite}

When restricted to complements of line graphs of triangle-free graphs of minimum degree at least~$2$, Orlin's conjecture can be rephrased in terms of equistarable graphs as follows: every connected component of an equistarable triangle-free graph $G$ of minimum degree at least $2$ is $2$-extendable. As the graph in Fig.~\ref{fig:equistarable_notstrongly} shows, this is not the case.
The work~\cite{MT} left open the validity of Orlin's conjecture for the class of complements of line
graphs of bipartite graphs, and more generally for the class of perfect graphs.
In this section, we prove that Orlin's conjecture holds for complements of line graphs of bipartite graphs (note that these graphs are perfect), using the notions of $1$-~and $2$-internal extendability (see Definition \ref{def:k-internally ext.}). In particular, we show that in the case of bipartite graphs, the classes of $(i)$ graphs in which each connected component is either a star or $2$-internally extendable, $(ii)$ strongly equistarable graphs, and $(iii)$ equistarable graphs, all coincide (cf.~Table~\ref{tabela}).

\begin{lem}\label{lem:equistarable-1-extendable}
Let $G$ be a connected equistarable bipartite graph with $\delta(G)\ge 2$.
Then, $G$ is $1$-extendable.
\end{lem}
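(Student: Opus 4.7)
The plan is to work with an equistarable weight function $\varphi$, translate it into Hall-type identities on the bipartition $(A,B)$ of $G$, and exploit the strict positivity of $\varphi$ in the tight case of Hall's condition to contradict connectedness whenever an edge fails to extend to a perfect matching.

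First I would observe that the hypothesis $\delta(G)\geq 2$ makes every star $E(v)$ maximal: a proper inclusion $E(v)\subsetneq E(u)$ would force every neighbor of $v$ to equal $u$, giving $d_G(v)=1$. Hence $\varphi(E(v))=1$ for all $v\in V(G)$. Fixing the bipartition $(A,B)$, the identity $\sum_{v\in S}\chi^{E(v)}=\chi^{E(S)}$, valid for any $S\subseteq A$ because each edge of $E(S)$ has exactly one endpoint in $S$, gives $\varphi(E(S))=|S|$; in particular, $|A|=\varphi(E(G))=|B|$. Moreover, for any $S\subseteq A$ with $T=N(S)\subseteq B$, the inclusion $E(S)\subseteq E(T)$ yields $|S|=\varphi(E(S))\leq\varphi(E(T))=|T|$, so (by symmetry) Hall's condition holds on both sides and $G$ admits a perfect matching.

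Next I would argue by contradiction: suppose an edge $e_0=uv$ with $u\in A$, $v\in B$ lies in no perfect matching of $G$. Then $G-u-v$ has no perfect matching either, so Hall's theorem yields a nonempty $S\subseteq A\setminus\{u\}$ with $|N_{G-u-v}(S)|<|S|$. Writing $T=N_G(S)$, this reads $|T\setminus\{v\}|<|S|$, which combined with the Hall inequality $|T|\geq|S|$ established above forces $|T|=|S|$ and $v\in T$.

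The critical step is then the equality case: from $\varphi(E(S))=|S|=|T|=\varphi(E(T))$ together with $E(S)\subseteq E(T)$ and $\varphi>0$ on every edge of $G$, I would conclude $E(S)=E(T)$. This means every edge incident to a vertex of $T$ has its other endpoint in $S$, so $N(T)\subseteq S$. Consequently $W:=S\cup T$ is closed under taking neighbors and hence is a union of connected components of $G$. Since $S$ is nonempty and $u\in A\setminus S$ with $u\notin T\subseteq B$, the set $W$ is a proper nonempty subset of $V(G)$, contradicting the connectedness of $G$. The main obstacle, and really the key idea, is precisely this last step: recognizing that the strict positivity of $\varphi$ promotes the Hall inequality $|S|\leq|T|$ into the structural equality $E(S)=E(T)$ in the tight case, which is exactly what produces the disconnection.
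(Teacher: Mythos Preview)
Your argument is correct and coincides in all essentials with the paper's second proof of this lemma: both show $|A|=|B|$ and $|S|\le |N(S)|$ from $\varphi(E(v))=1$, then obtain a tight set $|S|=|N(S)|$ from a putative non-extendable edge, and use the strict positivity of $\varphi$ together with connectedness to reach a contradiction. The only cosmetic difference is that the paper invokes Plummer's characterization of $1$-extendable bipartite graphs to produce the set $X$ with $|N(X)|\le |X|$, whereas you derive the same set directly via Hall's theorem in $G-u-v$; your route is thus slightly more self-contained. For completeness, note that the paper also offers an independent first proof via the Birkhoff--von~Neumann theorem, interpreting $\varphi$ as a doubly stochastic matrix and reading off a perfect matching through any given edge from the permutation-matrix decomposition.
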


We offer two proofs of this lemma. Our first proof is based on the classical Birkhoff-von Neumann theorem on doubly stochastic matrices.

\begin{proof}[First proof of Lemma \ref{lem:equistarable-1-extendable}]
Fix a bipartition $\{A,B\}$ of $V(G)$, and
an equistarable weight function $\varphi:E(G)\to \mathbb{R}_+$.
Since $\delta(G)\ge 2$, every star is maximal.
It follows that
$|A| = \varphi(E(A)) =\varphi(E(G)) = \varphi(E(B)) = |B|$.
Let $n = |A| = |B|$, and let $Q$ be the
$n\times n$ matrix with rows are indexed over $A$, and columns indexed over $B$
defined by
$$Q_{a,b} = \left\{
             \begin{array}{ll}
\varphi(ab), & \hbox{if $ab\in E(G)$;}\\
               0, & \hbox{otherwise.}
             \end{array}
           \right.$$
For every $a\in A$, we have
$\sum_{b\in B} Q_{a,b} = \sum_{b\in N_G(a)}\varphi(ab) = \varphi(E(a)) = 1$,
and similarly $\sum_{a\in A} Q_{a,b} = 1$ for all $b\in B$.
Since $Q$ only has non-negative entries, it is doubly stochastic.
By the Birkhoff-von Neumann theorem, $Q$ can be written as a convex combination of permutation matrices,
say
$Q = \sum_{i = 1}^k\lambda_i P^i$ where $\lambda_i\ge 0$ for all $i$ and $\sum_{i = 1}^k\lambda_i = 1$.

To show that $G$ is $1$-extendable, we need to argue that every edge $ab\in E(G)$ is contained in a perfect matching.
Note that $\varphi(ab)>0$ (since otherwise $E(a)\setminus \{ab\}$ would be a set of edges of unit $\varphi$-weight
not equal to a star). Consequently,
$Q_{a,b} = \sum_{i = 1}^k\lambda_i P^i_{a,b}>0$, so there exists some $j\in \{1,\ldots, k\}$ such that
$\lambda_j >0$ and $P^j_{a,b}>0$. We claim that $M = \{xy\,:\,P^j_{x,y}>0\}$ is a perfect matching in $G$.
To see this, it suffices to show that $M\subseteq E(G)$. But this follows from the fact that
$xy\in M$ implies
$Q_{x,y} = \sum_{i = 1}^k\lambda_i P^i_{x,y}\ge \lambda_jP^j_{x,y}>0$.
We conclude that $G$ is $1$-extendable.
\end{proof}

Our second proof of Lemma~\ref{lem:equistarable-1-extendable} is derived using the following characterization of $k$-extendable bipartite graphs.

\begin{thm}[Plummer~\cite{Plum}]\label{bipartite-k-extendable}
Let $k\geq 1$ and let $G=(V,E)$ be a connected bipartite graph with a bipartition $\{A,B\}$ of its vertex set and $|V|\geq 2k$. Then, $G$ is $k$-extendable if and only if $|A|=|B|$ and for all non-empty subsets $X \subseteq A$ with $|X|\leq |A|- k$, it holds that $|N(X)|\geq |X|+ k$.
\end{thm}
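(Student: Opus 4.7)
The plan is to prove the two directions separately. The backward direction is a direct application of Hall's theorem, while the forward direction proceeds by contradiction and rests on a K\"onig-type analysis of the bipartite subgraph $G[A \setminus X, N(X)]$ for a putative violator $X$.

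For $(\Leftarrow)$, assume $|A|=|B|$ and the Hall-like condition. Given any $k$-matching $M \subseteq E(G)$, form $G' := G - V(M)$, a bipartite graph with parts $A' := A \setminus V(M)$ and $B' := B \setminus V(M)$ of size $|A|-k$ each. I will verify Hall's condition for a perfect matching of $G'$ saturating $A'$: for any nonempty $Y \subseteq A'$, we have $1 \le |Y| \le |A|-k$, so by hypothesis $|N_G(Y)| \ge |Y|+k$, and hence $|N_{G'}(Y)| \ge |N_G(Y)|-k \ge |Y|$. By Hall, $G'$ has a perfect matching $M'$, and $M \cup M'$ is a perfect matching of $G$ extending $M$. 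The existence of some $k$-matching in $G$ follows from $|A| \ge k$ (because $|V|\ge 2k$) together with the Hall condition, which forces the existence of a perfect matching.

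For $(\Rightarrow)$, assume $G$ is $k$-extendable. Since $G$ has a perfect matching $M^*$, we immediately get $|A|=|B|$. For the Hall-like condition, I argue by contradiction: suppose some $X \subseteq A$ with $1 \le |X| \le |A|-k$ satisfies $|N(X)| \le |X|+k-1$, and set $T := N(X)$ and $t := |T|-|X|$, so $0 \le t \le k-1$. The goal is to construct a $k$-matching $M$ that cannot extend to a perfect matching. Observe that $M^*$ contains exactly $t$ edges in $G[A \setminus X, T]$, since $M^*$ matches $X$ into $T$ using $|X|$ edges, leaving precisely $t$ vertices of $T$ to be matched to $A \setminus X$. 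The strategy is to produce $t+1$ pairwise disjoint edges in $G[A \setminus X, T]$ and complete them to a $k$-matching by appending $k-t-1$ pairwise disjoint edges in $G[A \setminus X, B \setminus T]$; then the resulting $M$ uses $t+1$ vertices of $T$, so any extension must match $X$ into $T \setminus V(M)$, a set of size $|X|-1 < |X|$, which is impossible.

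The technical heart, and what I expect to be the main obstacle, is showing that the maximum matching $\mu$ in $G[A\setminus X, T]$ satisfies $\mu \ge t+1$; we already have $\mu \ge t$ from $M^*$. If $\mu = t$, K\"onig's theorem yields a minimum vertex cover $C = C_1 \cup C_2$ of $G[A\setminus X, T]$ with $C_1 \subseteq A\setminus X$, $C_2 \subseteq T$, and $|C_1|+|C_2|=t$. The set $W := T \setminus C_2$ then satisfies $N_G(W) \subseteq X \cup C_1$ with $|W|=|X|+|C_1|$, and correspondingly $X' := A \setminus (X \cup C_1)$ has $N_G(X') \subseteq (B\setminus T) \cup C_2$ with $|N_G(X')|\le|X'|$. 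I would turn this secondary tight Hall pair into a contradiction by invoking $k$-extendability on a tailored $k$-matching whose $B$-side hits $N_G(X')$ more aggressively than its $A$-side depletes $X'$, thereby breaking Hall in the residual graph; the boundary cases $X'=\emptyset$ (forcing $|C_1|\ge k$ and so $\mu\ge k>t$) and $|X'|>|A|-k$ (forcing $|X|+|C_1|<k$, handled by a dual construction using $W$ and $X\cup C_1$) are disposed of separately. Once $\mu \ge t+1$ is secured, the $k$-matching $M$ is assembled routinely: take an $M_1$-augmenting path $P$ in $G[A\setminus X, T]$ where $M_1 := M^*\cap G[A\setminus X, T]$, producing a matching $M_1 \triangle P$ of size $t+1$ whose new $A$-vertex $v_0$ is necessarily $M^*$-matched to $B\setminus T$; this loses at most one edge of $M^* \cap G[A\setminus X, B\setminus T]$ to vertex collisions, leaving $|A|-|T|-1 \ge k-t-1$ edges from which to complete $M$.
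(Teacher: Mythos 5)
The paper itself offers no proof of this statement --- it is imported verbatim from Plummer's work --- so your attempt must be judged on its own merits, and judged so, its necessity direction has a genuine gap, precisely at the point you yourself flag as ``the technical heart.'' Your sufficiency direction is the standard Hall argument and is correct, and in the necessity direction the outer skeleton is also right: the reduction to finding $t+1$ pairwise disjoint edges between $A\setminus X$ and $T=N(X)$ is exactly what is needed (any Hall-breaking $k$-matching must contain at least $t+1$ such edges, since edges inside $X\times T$ are neutral), and your augmenting-path bookkeeping for completing such a $(t+1)$-matching to a $k$-matching is sound: the new $A$-vertex $v_0$ is indeed $M^*$-matched into $B\setminus T$, only its $M^*$-edge is lost, and $|A|-|X|-t-1\ge k-t-1$ edges of $M^*$ inside $A\setminus X$ versus $B\setminus T$ remain available. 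But the claim $\mu\ge t+1$ is never proved. After extracting the K\"onig cover $C_1\cup C_2$ and the tight pair $(X',W)$, you only promise to ``invoke $k$-extendability on a tailored $k$-matching whose $B$-side hits $N_G(X')$ more aggressively than its $A$-side depletes $X'$''; this is a plan, not an argument --- nothing guarantees that disjoint edges covering $N_G(X')$ with $A$-ends outside $X'$ exist in sufficient number, and the ``dual construction'' for the boundary case $|X'|>|A|-k$ is likewise unsubstantiated. As submitted, the forward implication is unproven.

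The gap is repairable, and more cheaply than your sketch suggests, by working with $W=T\setminus C_2$ instead of $X'$. First, $k$-extendability gives $1$-extendability directly: any edge $e$ meets at most two edges of a perfect matching $M^*$, and since $|A|\ge |X|+k\ge k+1$ one can add $k-1$ edges of $M^*$ avoiding $e$, so $e$ lies in a $k$-matching and hence in a perfect matching. Now recall $|W|=|X|+|C_1|$ and $N(W)\subseteq X\cup C_1$, so $W$ is tight. If $C_2\neq\emptyset$, pick $c\in C_2\subseteq T=N(X)$ and a neighbor $x\in X$ of $c$; a perfect matching through $xc$ must match $W$ injectively into $(X\cup C_1)\setminus\{x\}$, a set of size $|W|-1$ --- contradiction. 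So $C_2=\emptyset$, and the identical argument excludes any edge from $C_1$ to $B\setminus T$; then $N(T)\subseteq X\cup C_1$ and $N(X\cup C_1)\subseteq T$, so connectivity forces $A=X\cup C_1$, giving $|A|\le |X|+t\le |X|+k-1\le |A|-1$, absurd. This rules out $\mu=t$ and closes your proof. One further boundary remark: with the paper's hypothesis $|V|\ge 2k$ (Plummer assumes $|V|\ge 2k+2$), your claim in the sufficiency direction that the Hall-type condition forces a perfect matching fails when $|A|=|B|=k$, where the condition is vacuous (e.g., $|A|=|B|=3$, $a_1$ adjacent to all of $B$ and $a_2,a_3$ adjacent only to $b_1$, with $k=3$); your argument, like Plummer's theorem, really needs $|A|\ge k+1$.
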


\begin{sloppypar}
\begin{proof}[Second proof of Lemma \ref{lem:equistarable-1-extendable}]
Suppose for a contradiction that $G$ is equistarable but not $1$-extendable.
Let $A, B$ be a bipartition of $G$ and let $\varphi:E(G)\to \mathbb{R}_+$ be an equistarable weight function of $G$.
Since $\delta(G)\geq 2$, every star is maximal, and we have
$|A| = \varphi(E(A)) = \varphi(E(G)) = \varphi(E(B)) = |B|.$
On the other hand, since $G$ is not $1$-extendable, Theorem \ref{bipartite-k-extendable} implies that there exists
$X\subseteq A$ with $|X|\leq |A|-1$ such that $|N(X)|< |X|+1$.
Since $G$ is connected, there exists an edge $e\in E(G)$ connecting a vertex of $N(X)$ with a vertex in $A\setminus X$, leading us to the next chain of equalities and inequalities:
$$|X| = \varphi (E(X)) < \varphi (E(X)) + \varphi (e) \leq \varphi (E(N(X))) = |N(X)| \leq |X|,$$
a contradiction.
\end{proof}
\end{sloppypar}

In the proof of Lemma~\ref{lem:bipartite equistarable} below, we will make use of the following result on matchings.

\begin{thm}[Dulmage--Mendelsohn, see \cite{LovPlum} and \cite{BHR_Star}]\label{thm:matching extension}
Let G be a bipartite graph with bipartition $V=A\cup B$. Suppose $M_A$ and $M_B$ are matchings in $G$. Then there exists a matching $M \subseteq M_A \cup M_B$
which covers all the vertices of $A$ covered by $M_A$ and all the vertices of $B$ covered by $M_B$.
\end{thm}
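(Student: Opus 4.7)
My plan is to analyze the structure of the subgraph $H = (V, M_A \cup M_B)$ of $G$. Since $M_A$ and $M_B$ are each matchings, every vertex is incident with at most one edge from each, so every vertex of $H$ has degree at most~$2$. Therefore $H$ is a disjoint union of isolated vertices, paths and cycles; since $G$ is bipartite, every cycle of $H$ has even length. Moreover, along any path or cycle the edges alternate between $M_A$ and $M_B$, because whenever an internal vertex of $H$ has degree~$2$ its two incident edges must be one from each matching (an edge lying in both $M_A$ and $M_B$ forms a $K_2$-component by itself).

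Call a vertex $v$ \emph{required} if either $v \in A$ is covered by $M_A$, or $v \in B$ is covered by $M_B$; equivalently, $v$ is required iff $v$ is incident in $H$ with an edge of its own matching ($M_A$ for $A$-vertices and $M_B$ for $B$-vertices). Because the components of $H$ are vertex-disjoint, it suffices to choose in each component $C$ a submatching $M_C \subseteq (M_A \cup M_B) \cap E(C)$ that covers every required vertex of $C$; then $M := \bigcup_C M_C$ is the desired matching.

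For each even cycle $C$ of $H$, both $M_A \cap E(C)$ and $M_B \cap E(C)$ are perfect matchings of $C$, so either choice covers every vertex. Isolated vertices carry no required vertex, and an isolated edge can simply be included. It remains to deal with a path $P = v_0 v_1 \cdots v_k$ of length $k \ge 2$. Each internal vertex $v_i$ is incident with both an $M_A$-edge and an $M_B$-edge of $P$, so $v_i$ is required and is covered both by $M_A \cap E(P)$ and by $M_B \cap E(P)$; the analysis therefore reduces to the two endpoints.

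Let $c_1, c_k \in \{M_A, M_B\}$ denote the matchings containing the endpoint edges $v_0 v_1$ and $v_{k-1} v_k$, respectively. Then $v_0$ is required precisely when $c_1 = M_A$ and $v_0 \in A$, or $c_1 = M_B$ and $v_0 \in B$, and the analogous statement holds for $v_k$. When $k$ is odd, $v_0$ and $v_k$ lie on opposite sides of the bipartition and $c_1 = c_k$, so $c_1 \cap E(P)$ covers both endpoints and every internal vertex, which suffices. When $k$ is even, $v_0$ and $v_k$ lie on the same side $X \in \{A, B\}$ and $c_1 \ne c_k$; exactly one endpoint is then required, and a direct check shows that $M_X \cap E(P)$ (meaning $M_A$ if $X = A$, else $M_B$) covers precisely that required endpoint along with every internal vertex. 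The only delicate step is this endpoint case analysis; everything else is routine structural decomposition, after which the desired matching $M$ is assembled by taking the union over components.
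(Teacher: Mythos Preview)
Your proof is correct. The paper does not give its own proof of this statement; it is quoted as a classical result of Dulmage and Mendelsohn, with references to \cite{LovPlum} and \cite{BHR_Star}, and is used only as a tool in the proof of Lemma~\ref{lem:bipartite equistarable}. The argument you give---decomposing $M_A\cup M_B$ into paths and even cycles, observing that edges alternate between $M_A$ and $M_B$ along each component, and then selecting in each component the appropriate alternating submatching so that both endpoints of odd paths and the unique required endpoint of even paths are covered---is exactly the standard proof of this result. The endpoint case analysis for even paths is handled correctly: since $c_1\ne c_k$ and both endpoints lie on the same side $X$, precisely one end-edge lies in $M_X$, and taking $M_X\cap E(P)$ covers that endpoint together with all internal vertices.
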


\begin{lem}\label{lem:bipartite equistarable}
Every component of an equistarable bipartite graph is either a star or $2$-internally extendable.
\end{lem}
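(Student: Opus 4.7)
By Lemma~\ref{lem:equistarable components} it suffices to treat a connected equistarable bipartite graph $G$; if $G$ is a star we are done, and otherwise $G$ contains a $2$-matching (any two non-incident edges, which must exist since $G$ is not a star). Fix such a $2$-matching $M_0=\{e,f\}$ with $e=a_1b_1$ and $f=a_2b_2$, and let $\varphi$ be an equistarable weight function of $G$. Let $A',B'$ denote the sets of non-leaves in the bipartition $\{A,B\}$, set $G^* = G-\{a_1,a_2,b_1,b_2\}$, $A^* = A'\setminus\{a_1,a_2\}$, and $B^* = B'\setminus\{b_1,b_2\}$. The plan is to produce a matching of $G^*$ covering $A^*\cup B^*$; adjoining $\{e,f\}$ then yields a perfect internal matching of $G$ extending $M_0$. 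By the Dulmage--Mendelsohn theorem (Theorem~\ref{thm:matching extension}), it suffices to find matchings in $G^*$ separately covering $A^*$ and $B^*$, and by symmetry I treat only $A^*$.

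Via Hall's theorem I need $|N_{G^*}(X)|\ge |X|$ for every $X\subseteq A^*$. Arguing by contradiction, let $X$ be a counterexample of minimum size and set $N=N_G(X)$, $k=|\{b_1,b_2\}\cap N|$, so $|N_{G^*}(X)|=|N|-k<|X|$. Since $X\subseteq A'$, the equistarability identities $\varphi(E(v))=1$ for non-leaves $v$ give $\varphi(E(X))=|X|$. Moreover, if $b_1\in N$, then $e\in E(b_1)\setminus E(X)$ (because $a_1\notin X\subseteq A^*$), yielding $\varphi(E(b_1)\cap E(X))\le 1-\varphi(e)$, and similarly if $b_2\in N$ then $\varphi(E(b_2)\cap E(X))\le 1-\varphi(f)$. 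Plugging these bounds into $\varphi(E(X))=\sum_{b\in N}\varphi(E(b)\cap E(X))$, together with $\varphi(E(b))\le 1$ for every $b$, shows that a counterexample can occur only when $k=2$ and $|N|=|X|+1$.

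For the remaining case, let $F_0$ denote the set of edges joining $N$ to $A\setminus X$. Since $G$ is bipartite and $E(X)\subseteq E(N)$, we have the identity
\[\chi^{F_0}=\sum_{b\in N}\chi^{E(b)}-\sum_{a\in X}\chi^{E(a)}.\]
If every vertex of $N$ lies in $B'$, then applying $\varphi$ gives $\varphi(F_0)=|N|-|X|=1$. But $F_0$ contains both $e$ and $f$, whose $A$-endpoints (resp.\ $B$-endpoints) are distinct, so $F_0$ is not a star, and $\varphi(F_0)=1$ would contradict the equistarability of $G$. The main obstacle is therefore to establish $N\subseteq B'$. The vertices $b_1,b_2$ lie in $B'$ because each of them has a neighbor in $X$ in addition to $a_1$ or $a_2$. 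For the remaining vertices of $N$ I would invoke the minimality of $X$: if $|X|\ge 2$, then for every $x\in X$ the set $X\setminus\{x\}$ is not a counterexample, forcing $N_{G^*}(X\setminus\{x\})=N_{G^*}(X)$; consequently every $b\in N_{G^*}(X)$ has at least two neighbors in $X$ and hence lies in $B'$. The case $|X|=1$ is immediate since then $N=\{b_1,b_2\}\subseteq B'$.
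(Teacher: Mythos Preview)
Your proof is correct and follows the same broad strategy as the paper's (Hall's condition, characteristic-vector identities to force a non-star edge set of $\varphi$-weight~$1$, and Dulmage--Mendelsohn to glue the two one-sided matchings), but you organise it more efficiently. The paper splits into the cases $L=\emptyset$ and $L\neq\emptyset$: in the leafless case it first invokes Lemma~\ref{lem:equistarable-1-extendable} to get $1$-extendability and then upgrades to $2$-extendability via a Hall argument; in the case with leaves it works with ``internal'' vertices (non-leaves having no leaf neighbours), proves $|N(S)|\ge|S|+2$ for all such sets $S$, and finishes by greedily covering the vertices adjacent to leaves. You instead handle both situations uniformly by working directly with the non-leaf sets $A^*,B^*$ inside $G-V(M_0)$, and your minimality-of-$X$ device cleanly forces $N\subseteq B'$ so that the star identities apply to every vertex of $N$. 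Your route thereby avoids the case split, the separate appeal to Lemma~\ref{lem:equistarable-1-extendable}, and the final greedy extension step; the paper's route, on the other hand, isolates the structural inequality $|N(S)|\ge|S|+2$ for internal vertex sets, which is a statement of some independent interest.
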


\begin{proof}
Let $G'$ be an equistarable bipartite graph and let $G=(V,E)$ be a component of $G'$. Then $G$ is equistarable by Lemma~\ref{lem:equistarable components}. Fix a bipartition $\{A,B\}$ of $V$, and let $\varphi:E \rightarrow \mathbb{R}^+$ be an equistarable weight function of $G$.

Suppose that $G$ is not a star.
Let $L$ be the set of leaves of $G$ and let us say that a vertex $v\in V$ is {\it internal} if it is neither a leaf nor adjacent to a leaf.

We split the proof into two cases.

\textbf{Case 1:}
$L = \emptyset$.

Since the graph $G$ does not have any leaves, Lemma~\ref{lem:equistarable-1-extendable} implies that $G$ is $1$-extendable.
We claim that $G$ is also $2$-extendable.
Suppose that is not the case, and let $\{ab,a'b'\}$ with $a,a'\in A$ and $b,b'\in B$ be
a $2$-matching not contained in any perfect matching.
By Hall's Theorem, there exists a subset $S\subseteq A\setminus\{a,a'\}$
such that $|N(S)\cap (B\setminus \{b,b'\})|<|S|$.
Since the graph is $1$-extendable, $S$ must have edges to both $b$ and $b'$,
and we must have $|N(S)\cap (B\setminus \{b,b'\})|=|S|-1$.
%Because the graph is $1$-extendable, we cannot have a subset $S'$ with larger deficiency, that is, with
%$|N(S')\cap (B\setminus \{b,b'\})|<|S'|-1$.

It follows that the sum of the characteristic vectors of the stars corresponding to $N(S)\cup\{b,b'\}$ minus the sum of the characteristic vectors of the stars of vertices in $S$ defines a subset $F$ of the edges of $G$ such that $\{ab,a'b'\}\subseteq F$; in particular, $F$ is not a star.
Since $|N(S)\cup\{b,b'\}|=|S|+1$, we have $\varphi(F)=1$.
This contradicts the fact that $\varphi$ is an equistarable weight function of $G$.
Thus in this case $G$ is $2$-extendable, which is equivalent to the condition that $G$ is $2$-internally extendable.

\textbf{Case 2:}
$L \neq \emptyset$.

We start by proving that for every subset $S$ of internal vertices in $A$ (or in $B$) we have $|N(S)|\geq |S|+2$.
Since each vertex in $S\cup N(S)$ is the center of a maximal star, we have
$|S| = \varphi(E(S)) \leq \varphi(E(N(S))) = |N(S)|$ and therefore we just have to rule out the cases when
$|N(S)|\in \{|S|,|S|+1\}$.

If $|N(S)|=|S|$, then since $G$ is connected,
and $S \neq A$ (since $L\neq \emptyset$ in this case), there must exist an edge incident with $N(S)$
but not with $S$. Therefore, since
$\varphi$ is strictly positive on all edges, we get $|S| = \varphi(E(S)) < \varphi(E(N(S))) = |N(S)|$, a contradiction.
Suppose now that $|N(S)|=|S|+1$ for some subset $S\subseteq A$ of internal vertices, and let
$F = E(N(S))\setminus E(S)$. Clearly, $\varphi(F) = 1$.
Since $\varphi$ is an equistarable weight function of $G$, the set $F$ is a maximal star.
Since every vertex in $N(S)$ incident with an edge in $F$ is also incident with an edge not in $F$,
the star $F$ cannot be rooted at a vertex in $N(S)$. Therefore, it is rooted at some vertex $x\in A\setminus S$
with $d_G(x)\ge 2$.
Since the star is maximal, we have $N(x) \subseteq N(S)$. By connectedness, this implies that
$A = S\cup\{x\}$ and $N(S) = B$.
In particular, all vertices of $A$ are of degree at least $2$. Therefore, $G$ has at least one leaf in the set $B=N(S)$,
which is in contradiction with the assumption that $S$ consists of internal vertices only.
Therefore we have shown that $|N(S)|\geq |S|+2$.

Now let $M$ be a $2$-matching of $G$.
The inequality we have proven implies that for every subset $S$ of internal vertices in $A-V(M)$ (or in $B-V(M)$), we have
$|N_{G-V(M)}(S)|\ge |S|$.
Therefore, by Hall's theorem there exists a matching $M_A$ in $G-V(M)$ covering all internal vertices in $A-V(M)$ and a matching $M_B$ in $G-V(M)$ covering all internal vertices in $B-V(M)$.
By Theorem \ref{thm:matching extension}, graph $G$ contains a matching $M' \subseteq M_A\cup M_B$ covering all internal vertices in $V(G)\setminus V(M)$.
This matching together with $M$ covers all internal vertices in $G$. Some vertices incident with a leaf might still be uncovered, but they can be covered one by one with new edges to form a perfect internal matching containing $M$.
This proves that $G$ is $2$-internally extendable and hence the lemma is proved.
\end{proof}

Now we have everything ready to prove the characterization of equistarable bipartite graphs,

\begin{thm}\label{thm:main_for_bipartite}
For every bipartite graph $G$ without isolated vertices, the following are equivalent:
\begin{description}
  \item[(a)] Every component of $G$ is either a star or $2$-internally extendable.
  \item[(b)] Every $2$-matching extends to a perfect internal matching.
  \item[(c)] $G$ is strongly equistarable.
  \item[(d)] $G$ is equistarable.
\end{description}
\end{thm}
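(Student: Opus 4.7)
My plan is to close the cycle of implications $(a) \Rightarrow (b) \Rightarrow (a)$ separately and then $(a) \Rightarrow (c) \Rightarrow (d) \Rightarrow (a)$, drawing on the lemmas already stated in the paper and the chain of implications along the left column of Table~\ref{tabela}. Since a bipartite graph is triangle-free, every lemma from Section~\ref{sec:Preliminaries} and from Section~\ref{sec:Basic results} that assumes triangle-freeness can be freely applied to $G$.

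First I would observe that the equivalence $(a) \Leftrightarrow (b)$ is nothing but the equivalence of conditions~2 and~3 in Lemma~\ref{lem:co-line-gpg}, so there is nothing left to prove for this part. Next, for $(a) \Rightarrow (c)$, I would chain three known implications through the complement of the line graph: Lemma~\ref{lem:co-line-gpg} turns condition~(a) into the statement that $\overline{L(G)}$ is a general partition graph; the result of Miklavi\v c and Milani\v c (the first implication on the right side of Table~\ref{tabela}) upgrades this to $\overline{L(G)}$ being strongly equistable; and finally Lemma~\ref{lem:strongly_equistar-strongly_equistab} transports this back across the $\overline{L(\cdot)}$ operator to give that $G$ is strongly equistarable.

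The implication $(c) \Rightarrow (d)$ is the general fact noted at the end of Section~\ref{sec:Preliminaries} immediately after Corollary~1, namely that every strongly equistarable graph is equistarable (this is the strongly equistarable analogue of the Mahadev--Peled--Sun result and does not require triangle-freeness). Finally, $(d) \Rightarrow (a)$ is exactly the content of Lemma~\ref{lem:bipartite equistarable}. Putting these four steps together closes the cycle of equivalences.

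There is no real obstacle here: all the hard work has been pushed into the earlier lemmas, especially Lemma~\ref{lem:bipartite equistarable} (which supplies the nontrivial direction $(d) \Rightarrow (a)$) and Lemma~\ref{lem:co-line-gpg} (which gives $(a) \Leftrightarrow (b)$). The only thing to be careful about is to record that the triangle-freeness hypothesis of Lemmas~\ref{lem:strongly_equistar-strongly_equistab} and~\ref{lem:co-line-gpg} is available because $G$ is bipartite, and that the absence of isolated vertices in $G$ is what allows us to speak of equistarability at all.
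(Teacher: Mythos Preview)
Your proposal is correct and is essentially the same argument as the paper's own proof: the paper also invokes Lemma~\ref{lem:co-line-gpg} for $(a)\Leftrightarrow(b)$, the left column of Table~\ref{tabela} for $(a)\Rightarrow(c)\Rightarrow(d)$, and Lemma~\ref{lem:bipartite equistarable} for $(d)\Rightarrow(a)$. The only difference is cosmetic---you unpack the chain $(a)\Rightarrow(c)$ through $\overline{L(G)}$ explicitly, whereas the paper simply cites Table~\ref{tabela}.
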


\begin{proof}
Since bipartite graphs are triangle-free, we already know (a) $\Leftrightarrow$ (b) (by Lemma~\ref{lem:co-line-gpg})
and (a) $\Rightarrow$ (c) $\Rightarrow$ (d) (by Table~\ref{tabela}).
Lemma \ref{lem:bipartite equistarable} establishes the implication (d) $\Rightarrow$ (a)
completing the proof of the theorem.
\end{proof}

Table~\ref{tabela} and Theorem~\ref{thm:main_for_bipartite} imply the following.

\begin{cor}
Orlin's conjecture holds for complements of line graphs of bipartite graphs. That is,
within the class of complements of line graphs of bipartite graphs,
every equistable graph is a general partition graph.
\end{cor}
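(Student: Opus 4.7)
The plan is to chase the target graph $H$ around Table~\ref{tabela}: start from the assumption that $H$ is equistable, translate this to a property of a bipartite graph $G$ with $H = \overline{L(G)}$ via Lemma~\ref{lem:equistar-equistab}, then apply the newly proved equivalence Theorem~\ref{thm:main_for_bipartite} inside the class of bipartite graphs, and finally translate back to a property of $H$ via Lemma~\ref{lem:co-line-gpg}. The composition of these three steps is exactly Orlin's conjecture restricted to complements of line graphs of bipartite graphs.

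In more detail, I would proceed as follows. Let $H$ be an equistable graph of the form $H = \overline{L(G)}$, where $G$ is some bipartite graph. First I would observe that isolated vertices of $G$ contribute nothing to $L(G)$, hence nothing to $H$, so I may assume without loss of generality that $G$ has no isolated vertices. Since $G$ is bipartite, it is triangle-free, so Lemma~\ref{lem:equistar-equistab} applies and gives that $G$ is equistarable. Now I invoke Theorem~\ref{thm:main_for_bipartite}: its implication (d)~$\Rightarrow$~(a) says precisely that every connected component of $G$ is either a star or $2$-internally extendable. Finally, I apply the implication (3)~$\Rightarrow$~(1) of Lemma~\ref{lem:co-line-gpg} (again using that $G$ is triangle-free) to conclude that $\overline{L(G)} = H$ is a general partition graph, which is what the corollary asserts.

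The proof is therefore just a three-line synthesis, and there is no serious obstacle at this stage since all the nontrivial work has already been packaged into Lemmas~\ref{lem:equistar-equistab} and~\ref{lem:co-line-gpg} and Theorem~\ref{thm:main_for_bipartite}. The only point requiring a small amount of care is the reduction to graphs $G$ without isolated vertices, needed in order to apply Theorem~\ref{thm:main_for_bipartite} whose hypothesis explicitly excludes isolated vertices; this is handled by the trivial observation above. Once this is recorded, the conclusion follows immediately.
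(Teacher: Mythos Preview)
Your proposal is correct and follows precisely the route the paper indicates: the paper's own proof is the single sentence ``Table~\ref{tabela} and Theorem~\ref{thm:main_for_bipartite} imply the following,'' and you have simply unpacked the relevant path through Table~\ref{tabela} (namely Lemma~\ref{lem:equistar-equistab}, then Theorem~\ref{thm:main_for_bipartite}, then Lemma~\ref{lem:co-line-gpg}). Your remark about discarding isolated vertices of $G$ is a sensible bit of housekeeping that the paper leaves implicit.
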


Lemma~\ref{lem:equistarable components} and its proof show that triangle-free equistarable graphs are not closed under taking disjoint union.
On the other hand, Theorem~\ref{thm:main_for_bipartite}, in particular the equivalence of items (a) and (d), shows that
for the case of bipartite graphs, we have the following.

\begin{cor}
A bipartite graph $G$ is equistarable if and only if every connected component of $G$ is equistarable.
\end{cor}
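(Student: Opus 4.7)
The forward direction is already covered by Lemma~\ref{lem:equistarable components}, which applies to all graphs without any bipartiteness assumption, so the plan is to focus on the converse. The key observation is that Theorem~\ref{thm:main_for_bipartite} characterizes equistarability of a bipartite graph via a \emph{componentwise} property, namely (a): every component is either a star or $2$-internally extendable. Any property of this shape is automatically closed under disjoint union, and the corollary will fall out essentially for free.

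Concretely, suppose every connected component of $G$ is equistarable. Since $G$ is bipartite, each such component $C$ is itself a connected bipartite graph without isolated vertices (an isolated vertex in $C$ would make $C$ a single-vertex graph, which is not equistarable by definition). Applying the implication (d)~$\Rightarrow$~(a) of Theorem~\ref{thm:main_for_bipartite} to each component $C$ viewed as a bipartite graph on its own, I conclude that $C$ is either a star or $2$-internally extendable. Since this holds for every component of $G$, the graph $G$ itself satisfies condition (a) of Theorem~\ref{thm:main_for_bipartite}, and then the implication (a)~$\Rightarrow$~(d) of the same theorem yields that $G$ is equistarable.

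There is no real obstacle here; the only thing to double-check is that $G$ has no isolated vertices under the hypothesis (so that ``equistarable'' is a meaningful claim about $G$), which follows from the same remark applied to each component. The contrast with Lemma~\ref{lem:equistarable components} — where the disjoint union of two copies of $H$ from Fig.~\ref{fig:equistarable_notstrongly} fails to be equistarable — is worth flagging briefly: that obstruction cannot arise for bipartite graphs precisely because in the bipartite setting, equistarability is equivalent to the purely combinatorial, componentwise property (a), whereas for general triangle-free graphs the equivalence breaks and disjoint unions can fail.
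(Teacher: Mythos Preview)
Your argument is correct and matches the paper's approach exactly: the paper simply notes that the corollary follows from the equivalence of items (a) and (d) in Theorem~\ref{thm:main_for_bipartite} (with the forward direction already given by Lemma~\ref{lem:equistarable components}), and you have spelled out precisely this reasoning.
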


We conclude this section with an algorithmic remark. To the best of our knowledge, the computation complexity status of recognizing graphs in each of the following classes is open: (strongly) equistable graphs, general partition graphs, triangle graphs, (strongly) equistarable graphs.
The characterization of equistarable bipartite graphs given by Theorem~\ref{thm:main_for_bipartite}
implies an efficient recognition algorithm for this special case.

\begin{prop}\label{prop:bipartite_recognition}
Equistarable bipartite graphs can be recognized in polynomial time.
\end{prop}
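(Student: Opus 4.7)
The plan is to combine Theorem~\ref{thm:main_for_bipartite} with standard bipartite matching algorithms. By the equivalence of items (b) and (d) in Theorem~\ref{thm:main_for_bipartite}, a bipartite graph $G$ without isolated vertices is equistarable if and only if every $2$-matching of $G$ extends to a perfect internal matching. Thus recognition reduces to testing this extendability property.

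First, I would compute the connected components of $G$; a component that is a star (including $K_2$) contains no $2$-matching, so the extendability condition is vacuous there and we move on. For each remaining component $C$ (which, not being a star, does contain a $2$-matching), I would check that every $2$-matching of $C$ extends to a perfect internal matching of $C$. Since the number of $2$-matchings in $C$ is $O(|E(C)|^{2})$, it suffices to have a polynomial-time subroutine for the following task: given a $2$-matching $M = \{e,f\}$ in $C$, decide whether $C$ contains a matching $M' \supseteq M$ whose uncovered vertices are all leaves of $C$. Equivalently, one asks whether the bipartite graph $C - V(M)$ admits a matching covering every vertex of $C$ of degree at least $2$ that does not belong to $V(M)$.

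The key subroutine is therefore: given a bipartite graph $H$ and a subset $U \subseteq V(H)$, decide whether $H$ has a matching that covers every vertex of $U$. This is a classical problem solvable in polynomial time: for instance, one can formulate it as a feasible $f$-factor problem with $f(v) = 1$ for $v \in U$ and $0 \le f(v) \le 1$ for $v \notin U$, or, more directly, translate it to a flow problem on a bipartite source/sink network with unit lower bounds on the arcs incident to vertices of $U$. Feasibility can then be checked via a single maximum-flow (equivalently, maximum-matching) computation in a modified network, which runs in polynomial time.

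Putting these pieces together yields the algorithm: test each connected component, and for each non-star component iterate over its $O(|E|^{2})$ pairs of disjoint edges, invoking the matching subroutine at each step. Since the number of iterations is polynomial and each subroutine call runs in polynomial time, the whole procedure runs in polynomial time and correctly recognizes equistarable bipartite graphs by Theorem~\ref{thm:main_for_bipartite}. The main subtlety worth highlighting in the write-up is to justify the polynomial-time solvability of the "matching covering a prescribed vertex set" subproblem in bipartite graphs; everything else is a direct application of the characterization already proven.
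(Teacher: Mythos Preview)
Your proposal is correct and follows essentially the same route as the paper: invoke Theorem~\ref{thm:main_for_bipartite} to reduce recognition to checking, for each of the $O(|E|^2)$ $2$-matchings $M$, whether $G-V(M)$ has a matching covering all non-leaf vertices outside $V(M)$. The only cosmetic differences are that the paper works globally rather than component-by-component, and solves the ``matching covering a prescribed set $U$'' subproblem via maximum-weight matching (weighting each edge by $|e\cap U|$) or matching matroids, whereas you phrase it as an $f$-factor / flow feasibility test; all of these are standard polynomial reductions, so the arguments are interchangeable.
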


\begin{proof}
Given a graph $G$, testing for bipartiteness can be done in linear time, and we may also assume that $G$ has no isolated vertices.
By Theorem \ref{thm:main_for_bipartite}, in order to determine if $G$ is equistarable, it is enough to enumerate all
of the $O(|E(G)|^2)$ $2$-matchings of $G$, and check, for each of them, if it can be extended to a perfect internal matching.
Given a $2$-matching $M$, this can be done as follows. Defining $U=\{u\in V(G)\setminus V(M) \mid d_G(u)>1\}$, the
 problem becomes equivalent to the problem of determining if the graph $G - V(M)$ contains a matching covering all vertices in $U$.
This problem can be solved in polynomial time either via matching matroids (see, e.g., \cite{schrijver-book}), or
by reducing the problem to an instance of the maximum weight matching problem.
This can be done by assigning a weight $w(e)=|e\cap U|\in \{0,1,2\}$ to each edge $e$ of the graph $G - V(M)$. In this case, the graph $G - V(M)$ contains a matching that covers $U$ if and only if the graph $G - V(M)$ contains a matching $M'$ of total weight $w(M')\geq |U|$. Since the maximum weight matching problem is solvable in polynomial time~\cite{Edmonds}, the proposition is proved.
\end{proof}

\section{Equistarable forests}\label{sec:forests}

Recall that a forest is an acyclic graph. In this section we show that the classes of $(i)$ forests in which each connected component is $2$-internally extendable or a star, $(ii)$ strongly equistarable forests, $(iii)$ equistarable forests, and $(iv)$ $P_5$-constrained forests,
all coincide (cf.~Table~\ref{tabela}).

\begin{lem}\label{lem:a1-ext. trees}
Every tree $T$ with at least one edge is $1$-internally extendable.
\end{lem}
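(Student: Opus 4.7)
The plan is to reduce the statement to the following key lemma: for every tree $T'$ with at least one vertex and every designated vertex $u \in V(T')$, there is a matching $M$ in $T'$ such that $u \notin V(M)$ and every non-leaf of $T'$ other than $u$ lies in $V(M)$. Granted this, the target statement follows quickly. Given a tree $T$ and any edge $e = uv \in E(T)$, removing $e$ splits $T$ into two subtrees $T_u$ and $T_v$ containing $u$ and $v$, respectively. Applying the key lemma to $(T_u, u)$ and $(T_v, v)$ yields matchings $M_u, M_v$ avoiding $u$ and $v$. Since $V(T_u) \cap V(T_v) = \emptyset$ and $d_T(w) = d_{T_u}(w)$ for every $w \in V(T_u) \setminus \{u\}$ (symmetrically for $T_v$), the set $M = M_u \cup M_v \cup \{e\}$ is a matching in $T$ covering every non-leaf of $T$. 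Hence $M$ is a perfect internal matching of $T$ extending the $1$-matching $\{e\}$, so $T$ is $1$-internally extendable.

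I would prove the key lemma by strong induction on $|V(T')|$. The cases $|V(T')| \in \{1,2\}$ are trivial with $M = \emptyset$. For the inductive step, assume $|V(T')| \ge 3$; then $T'$ has at least two leaves, so we may pick a leaf $\ell \neq u$ of $T'$. Starting from $\ell$, follow the unique path into $T'$ as long as the next vertex has degree $2$ in $T'$ and is different from $u$. This produces a maximal pendant path $v_0 = \ell, v_1, \ldots, v_k$ with $d_{T'}(v_i) = 2$ and $v_i \neq u$ for $1 \le i \le k-1$, while $v_k$ is either equal to $u$ (Case 1) or satisfies $d_{T'}(v_k) \ge 3$ (Case 2). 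Let $T'' = T' - \{v_0, \ldots, v_{k-1}\}$; this is a tree (obtained by iteratively peeling leaves), contains $u$, and is strictly smaller than $T'$, so the inductive hypothesis applied to $(T'', u)$ yields a matching $M'$ in $T''$ with $u \notin V(M')$ covering every non-leaf of $T''$ different from $u$.

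The key step is to augment $M'$ with edges along the pendant path covering $v_1, \ldots, v_{k-1}$ without touching $v_k$ (and a fortiori without covering $u$ in Case 1). I would handle this by a parity argument: take $P = \{v_1 v_2, v_3 v_4, \ldots, v_{k-2} v_{k-1}\}$ when $k$ is odd, and $P = \{v_0 v_1, v_2 v_3, \ldots, v_{k-2} v_{k-1}\}$ when $k$ is even (with the convention that $P = \emptyset$ if $k = 1$). In either case $P$ is a matching of $T'$ covering exactly the set $\{v_1, \ldots, v_{k-1}\}$ together with possibly the leaf $v_0$, and it avoids the edge $v_{k-1} v_k$, so $v_k \notin V(P)$. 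Since $V(P) \cap V(T'') = \emptyset$, the union $M = M' \cup P$ is a matching in $T'$. To verify that $M$ covers every non-leaf of $T'$ distinct from $u$: the path interior $v_1, \ldots, v_{k-1}$ is covered by $P$; $v_0$ is a leaf; in Case 1 the remaining path vertex is $v_k = u$, which needs no cover, while in Case 2 we have $d_{T''}(v_k) = d_{T'}(v_k) - 1 \ge 2$, so $v_k$ is a non-leaf of $T''$ different from $u$ and is therefore covered by $M'$; every remaining $w \in V(T'') \setminus \{v_k\}$ satisfies $d_{T'}(w) = d_{T''}(w)$, so $w$ is a non-leaf of $T'$ iff it is a non-leaf of $T''$, in which case it is covered by $M'$ unless $w = u$. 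The main subtlety is the combination of the case split on $v_k$ with the correct parity choice for $P$; once these are in place, the rest is straightforward bookkeeping.
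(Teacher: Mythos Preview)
Your proof is correct, but it takes a genuinely different route from the paper's. The paper argues by direct induction on $|V(T)|$: given the edge $e$, it deletes \emph{both} endpoints of $e$, obtaining a forest whose non-trivial components $T_i$ each have a unique vertex $v_i$ that was adjacent to an endpoint of $e$; choosing any edge $e_i$ of $T_i$ through $v_i$ and applying the induction hypothesis to $(T_i,e_i)$ yields perfect internal matchings $M_i$, and $\{e\}\cup\bigcup_i M_i$ is the desired matching. This is a one-step recursion whose inductive hypothesis is exactly the statement being proved.

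Your approach instead isolates an auxiliary ``avoid a marked vertex'' lemma and proves it by peeling a maximal pendant path together with a parity trick, then deletes only the edge $e$ and applies the lemma in each half. Both arguments are sound; the paper's is shorter and avoids the case split on $v_k$ and the parity bookkeeping, while your key lemma is a clean reusable statement in its own right. One small point worth making explicit in your write-up: when you assert that $v_k$ is either $u$ or of degree at least $3$, you are implicitly using that if $v_k$ were a leaf then $T'$ would coincide with the path $v_0,\dots,v_k$, forcing $u=v_k$ since $u$ lies in $T'$ but is none of $v_0,\dots,v_{k-1}$. With that observation stated, the case analysis is airtight.
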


\begin{proof}
The proof is by induction on the number of vertices.
For $n=2$ the statement is true, since the tree with two vertices has only one edge.
Let $n\geq 3$, let $T$ be a tree on $n$ vertices and let $e$ be the edge of $T$ that we want to extend into a perfect internal matching.
Suppose the statement of the lemma is true for every tree on at most $n-1$ vertices. We now show that it holds also for $T$.
The removal of the endpoints of $e$ (along with all the edges incident to them) from $T$ results in a forest with connected components $T_1, \dots T_k$, $k\geq 1$. Let $I =\{1\leq i\leq k \mid E(T_i)\neq \emptyset\}$.
For each $i\in {I}$, a tree $T_i$ has no more than $n-1$ vertices (and at least one edge) and is therefore $1$-internally extendable by the inductive hypothesis.
For each such $i$, let $v_i\in V(T_i)$ be the unique vertex in $T_i$ adjacent to an endpoint of $e$, and let $e_i$ be an edge in $T_i$ incident with $v_i$.
Let $M_i$ be a perfect internal matching of $T_i$ that extends $e_i$. By our inductive hypothesis, $M_i$ exists.
Using the fact that every leaf of $T$ is also a leaf of one of the subtrees, we can construct a perfect internal matching $M$ of $T$ containing $e$ as:
$M = \{e\} \cup \underset{i\in I}{\bigcup}M_i$. Therefore we conclude that $T$ is $1$-internally extendable.
\end{proof}

Using the above lemma, we now prove the stated characterization.

\begin{thm}\label{thm:main_for_forests}
For every forest $F$ without isolated vertices, the following are equivalent:
\begin{description}
  \item[(a)] Every connected component of $F$ is either a star or $2$-internally extendable.
  \item[(b)] Every $2$-matching extends to a perfect internal matching.
  \item[(c)] $F$ is strongly equistarable.
  \item[(d)] $F$ is equistarable.
  \item[(e)] $F$ is $P_5$-constrained.
\end{description}
\end{thm}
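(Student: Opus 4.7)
The plan is to reduce as much as possible to Theorem \ref{thm:main_for_bipartite}: since forests are bipartite, that theorem immediately yields the equivalences (a) $\Leftrightarrow$ (b) $\Leftrightarrow$ (c) $\Leftrightarrow$ (d). The implication (d) $\Rightarrow$ (e) is likewise already available from Table \ref{tabela}: combining Lemma \ref{lem:equistar-equistab} with the necessity of the triangle condition for equistability, and then Lemma \ref{lem:co-line-triangle} (applicable since forests are triangle-free), one sees that every equistarable forest is $P_5$-constrained. So the only genuinely new implication to prove is (e) $\Rightarrow$ (a).

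To prove (e) $\Rightarrow$ (a) I would argue component-by-component: fix a component $T$ of the $P_5$-constrained forest $F$ that is not a star, and show that $T$ is $2$-internally extendable. Since a non-star tree has diameter at least $3$, $T$ contains a $2$-matching, so it suffices to show that an arbitrary $2$-matching $M = \{e_1, e_2\}$ of $T$ extends to a perfect internal matching. Writing $e_1 = u_1 v_1$ and $e_2 = u_2 v_2$ with $v_1$ and $u_2$ the closest pair of endpoints, I would consider the unique $v_1$-$u_2$ path in $T$, say $v_1 w_1 \cdots w_k u_2$ with $k \geq 0$, and the associated extended path $\tilde P = u_1 v_1 w_1 \cdots w_k u_2 v_2$. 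The crucial consequence of the hypothesis is that for every $1 \leq i \leq k$, the five consecutive vertices of $\tilde P$ centered at $w_i$ form a $P_5$ in $T$ with $w_i$ in the middle, hence $d_T(w_i) \geq 3$ and $w_i$ has at least one off-path neighbor $x_i \in N_T(w_i) \setminus V(\tilde P)$. Because $T$ is a tree, distinct $x_i$'s lie in distinct components of $T \setminus V(\tilde P)$ (else a cycle would arise).

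The extension $M'$ of $M$ is then assembled from (i) the edges $w_i x_i$ for $i = 1, \ldots, k$, and (ii) for each component $C$ of $T \setminus V(\tilde P)$ with at least one edge, a perfect internal matching of $C$ obtained from Lemma \ref{lem:a1-ext. trees}, chosen to contain an edge incident to the root $r$ of $C$ when $r \notin \{x_1, \ldots, x_k\}$; when instead $r = x_i$, the lemma is applied separately to each nontrivial subtree of $C - x_i$. The main obstacle is the careful bookkeeping required to verify that $M'$ is a matching and that every internal vertex of $T$ is covered; this works out cleanly because each off-path component attaches to $\tilde P$ via a unique edge (so the partial matchings are pairwise vertex-disjoint) and because the root of any such component is internal in $T$ exactly when its component has an edge, which is precisely when the procedure matches it.
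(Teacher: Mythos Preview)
Your approach is essentially the paper's: both reduce (a)--(d) to Theorem~\ref{thm:main_for_bipartite}, take (d)$\Rightarrow$(e) from Table~\ref{tabela}, and for (e)$\Rightarrow$(a) walk along the path joining the two matching edges, use the $P_5$-constraint to pick an off-path edge at each intermediate vertex, and then invoke Lemma~\ref{lem:a1-ext. trees} on the resulting pieces. The one structural difference is that the paper deletes the \emph{edges} of the connecting path rather than the vertices of $\tilde P$; after that deletion each resulting subtree already contains exactly one edge of $M\cup\{w_ix_i:1\le i\le k\}$, so a single application of Lemma~\ref{lem:a1-ext. trees} per subtree suffices with no case distinction on the root. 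Your vertex-deletion variant also works, but in the case $r=x_i$ you should say explicitly that Lemma~\ref{lem:a1-ext. trees} is applied to each nontrivial subtree $S$ of $C-x_i$ at an edge incident to the root of $S$ (the neighbour of $x_i$ in $S$); without this choice such a vertex---which can be internal in $T$ while being a leaf of $S$---might remain uncovered, and your final sentence only justifies coverage of the root $r$ of $C$, not of these secondary roots.
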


\begin{proof}
The fact that (a), (b), (c), (d) are equivalent follows from Theorem~\ref{thm:main_for_bipartite}.
Since forests are triangle-free, implication (d) $\Rightarrow$ (e) is given by Table~\ref{tabela}.

It remains to show (e) $\Rightarrow$ (a).
Let a forest $F$ be $P_5$-constrained. Let $T$ be a connected component of $F$. Then, $T$ is also $P_5$-constrained.
Suppose that $T$ is not a star.
Fix a $2$-matching $M=\{e,f\}$ of $T$ and consider the (unique) shortest path $P$ in $F$ between $e$ and $f$.
We construct another matching $M'$ by putting in it for every vertex of $P$ not
covered by $M$, an arbitrary edge incident with it and not in $P$.
(Since $T$ is $P_5$-constrained, all the vertices of $P$ have degree at least $3$.)
By deleting all edges in $P$, we are left with a forest $F'$ consisting of trees (note that every leaf of $T$ is also a leaf in $F$), each of which contains
exactly one edge of $M'\cup M$.
By Lemma \ref{lem:a1-ext. trees}, this edge can be extended to a perfect internal matching in the corresponding tree.
This means that the matching $M' \cup M$ can be extended to a perfect internal matching of $T$,
thus $F$ is $2$-internally extendable.
\end{proof}

Since forests are bipartite graphs, testing whether a given forest is equistarable can be done in polynomial time by Proposition \ref{prop:bipartite_recognition}. Theorem~\ref{thm:main_for_forests} implies that this can be done even in linear time.

\begin{prop}
Equistarable forests can be recognized in linear time.
\end{prop}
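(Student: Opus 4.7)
The plan is to use the equivalence (d) $\Leftrightarrow$ (e) in Theorem~\ref{thm:main_for_forests}: a forest $F$ without isolated vertices is equistarable if and only if it is $P_5$-constrained. Given an input graph, the algorithm will first verify in linear time via depth-first search that the graph is a forest without isolated vertices, and then test the $P_5$-constrained condition in linear time.

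The main (and really the only) obstacle is to show that the $P_5$-constrained condition itself can be tested in linear time on forests. For this I would establish the following simplification, specific to forests: a forest $F$ is $P_5$-constrained if and only if no vertex of degree exactly $2$ has both neighbors of degree at least $2$. The nontrivial direction goes as follows. Suppose $v$ has $d_F(v)=2$ with $N(v)=\{a,b\}$ and both $a,b$ of degree at least $2$. Pick any $a'\in N(a)\setminus\{v\}$ and $b'\in N(b)\setminus\{v\}$. Acyclicity forces the five vertices $a',a,v,b,b'$ to be pairwise distinct: the only nontrivial coincidences to rule out are $a'=b$ (which would give a triangle $v$-$a$-$b$), $b'=a$ (symmetrically a triangle), and $a'=b'$ (which would give a $4$-cycle $v$-$a$-$a'$-$b$-$v$). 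Hence $(a',a,v,b,b')$ is a $P_5$ with middle vertex $v$ of degree $2$, violating the $P_5$-constrained condition. The converse direction is immediate: a middle vertex of a $P_5$ has degree at least $2$, and if it has degree exactly $2$ then its two neighbors on the path, which are necessarily all of $N(v)$, must each have a further neighbor on the path and hence degree at least $2$.

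With this simplification in hand, the algorithm proceeds as follows: compute $d_F(u)$ for every vertex $u$ (linear time, by scanning the adjacency lists) and then scan all vertices of degree $2$, checking in $O(1)$ time per such vertex whether both of its neighbors have degree at least $2$. The forest is declared equistarable precisely when no such violating vertex is found, yielding total running time $O(|V(F)|+|E(F)|)$.
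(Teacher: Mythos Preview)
Your proof is correct and follows essentially the same approach as the paper: both invoke Theorem~\ref{thm:main_for_forests} to reduce to testing the $P_5$-constrained condition, and both observe that in a forest this amounts to checking, for each vertex of degree~$2$, whether at least one of its two neighbors is a leaf (equivalently, whether both have degree at least~$2$). The paper states this reduction without justification, whereas you supply a careful argument using acyclicity to rule out the possible coincidences among $a',a,v,b,b'$; this extra detail is welcome but does not constitute a different method.
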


\begin{proof}
It is well know that acyclicity can be tested in linear time using depth-first search. Given a forest $F$, in order to determine if $F$ is equistarable, it is enough to check whether each connected component $C$ of $F$ is $P_5$-constrained (by Theorem \ref{thm:main_for_forests}).
This means that it is enough to identify all vertices $v$ with $d_F(v)=2$ and check that each of them has at least one leaf in their neighborhood.
\end{proof}

%\section{Conclusion}\label{sec:conclusion}

\smallskip
\subsection*{Acknowledgement}

The third author is grateful to Nicolas Trotignon and Denis Cornaz for stimulating discussions on the topic.

\bibliographystyle{abbrv}

\bibliography{equistarable-bib}{}

\end{document}